\def\({\left(}
\def\){\right)}
\newtheorem{theorem}{Theorem}[section]
\newtheorem{corollary}[theorem]{Corollary}
\newtheorem{proposition}{Proposition}
\newtheorem{conjecture}{Conjecture}
\newtheorem{lemma}[theorem]{Lemma}
\theoremstyle{definition}
\newtheorem{example}{Example}
\newtheorem{remark}{Remark}
\begin{document}

\title{Hook length and symplectic content in  partitions}

\author{T. Amdeberhan}
\address{Department of Mathematics\\ Tulane University\\ New Orleans, LA 70118, USA}
\email{tamdeber@tulane.edu}

\author{G. E. Andrews}
\address{Department of Mathematics\\ Penn State University\\ University Park, PA 16802, USA}
 \email{gea1@psu.edu} 

\author{C. Ballantine}
\address{Department of Mathematics and Computer Science\\ College of the Holy Cross \\ Worcester, MA 01610, USA \\} 
\email{cballant@holycross.edu}

\begin{abstract} 
The dimension of an  irreducible representation of $GL(n,\mathbb{C})$, $Sp(2n)$, or $SO(n)$ is given by  the respective hook-length and content formulas for the corresponding partition. The first author,  inspired by the Nekrasov-Okounkov formula, conjectured  combinatorial interpretations of analogous expressions involving hook-lengths and symplectic/orthogonal contents. We prove special cases of these conjectures.  In the process, we show that partitions of $n$ with all symplectic contents non-zero are equinumerous with partitions of $n$ into distinct even parts. We also present Beck-type companions to this identity.  In this context, we give the parity of the number of partitions into distinct parts with odd (respectively, even) rank.  We study the connection between the sum of  hook-lengths and the sum of inversions in the binary representation of a partition.  In addition, we introduce a new partition statistic, the $x$-ray list of a partition, and explore its connection with distinct partitions as well as partitions maximally contained in a given staircase partition.  
\end{abstract}

\maketitle

\section{introduction}\label{intro}

\noindent
The interplay between the hook lengths of a partition $\lambda$  and the contents of its cells appears in Stanley's formula for the dimension of the irreducible polynomial representation of the general linear group, $GL(n, \mathbb C)$, indexed by $\lambda$ \cite[(7.106)]{ec2}. There are also analogous formulas  for irreducible polynomial representations of  symplectic and orthogonal groups \cite{CS, EK, S}. 

\smallskip
\noindent
In this article, we prove a variety of results on the combinatorial nature of the expressions involving the symplectic and orthogonal hook-content formulas and certain generalizations thereof. These objects have been conjectured by  the first author \cite{A12}. In order to state the conjectures and our results, we begin by introducing the relevant notations. 

\smallskip
\noindent
A {\it partition}  $\lambda=(\lambda_1,\lambda_2,\dots,\lambda_{\ell})$ of $n\in\mathbb{N}$, denoted $\lambda\vdash n$, is a finite non-increasing sequence  of positive integers, called \textit{parts}, that add up to $n$. 
The {\it size} of $\lambda$, denoted  by $\vert\lambda\vert$,  is the sum of all its parts. The {\it length} of $\lambda$, denoted by $\ell(\lambda)$, is the number of parts of $\lambda$.  As usual, $p(n)$ denotes the number of partitions of $n$. We  denote by  $\mathcal P(n \mid X)$  the set of partitions of $n$ satisfying some condition $X$ and define $p(n\mid X):=|\mathcal P(n\mid X)|$.  The {\it Young diagram} of a partition $\lambda$ is a left-justified array of squares such that the $i^{th}$-row of the array contains $\lambda_i$ squares. 
\begin{example}\label{eg1} If $\lambda= (5, 3, 3, 2,1)$, then $\vert\lambda\vert=14, \ell(\lambda)=5$. The  Young diagram of $\lambda$ is shown below. 
$$\tiny\ydiagram{5, 3, 3, 2,1}$$ \end{example}
\noindent
A cell $(i,j)$ of $\lambda$ is the square in the $i^{th}$-row and $j^{th}$-column in the Young diagram of $\lambda$. The conjugate of $\lambda$ is the partition $\lambda'$ whose Young diagram has rows that are precisely the columns of the Young diagram of $\lambda$. For example, $\lambda'=(6,5,4,2,2)$ is the conjugate of $\lambda= (5,5, 3, 3, 2,1)$. When necessary, we will use the exponential notation $\lambda=(a_1^{m_1},   a_2^{m_2}, \ldots, a_k^{m_k})$ to mean that $a_i$ appears $m_i$ times as a part of $\lambda$, for $i=1, 2, \ldots, k$. This will mainly be used for hooks; that is, partitions of the form $(a, 1^b)$. The {\it hook length} of a cell $u=(i,j)$  of $\lambda$ is $h^\lambda(u)=\lambda_i+\lambda_j'-i-j+1$ and its {\it content} is $c^\lambda(u)=j-i$. Then, the dimension of the irreducible  representation of $GL(n, \mathbb C)$ corresponding to $\lambda$ with $\ell(\lambda)\leq n$ is given by \cite[(7.106)]{ec2} $$\dim_{GL}(\lambda,n)=\prod_{u\in \lambda}\frac{n+c^\lambda(u)}{h^\lambda(u)}.$$ 

\smallskip
\noindent
The irreducible representations of the symplectic group $Sp(2n)$, consisting of  $2n\times 2n$ matrices which preserve any non-degenerate, skew-symmetric form on $\mathbb C^{2n}$, are also indexed by partitions $\lambda$ with $\ell(\lambda)\leq n$. The {\it symplectic content}  of cell $(i,j)\in\lambda$ is
$$c^\lambda _{sp}(i,j)=\begin{cases}\lambda_i+\lambda_j-i-j+2 & \text{ if } i>j\\ i+j-\lambda'_i-\lambda'_j & \text{ if } i\leq j.
\end{cases}$$ 
The irreducible  representations of the special orthogonal group $SO(n)$, consisting of orthogonal matrices of determinant $1$,  are  indexed by partitions $\lambda$ with $\ell(\lambda)\leq \lfloor\frac{n}{2}\rfloor$.
The {\it orthogonal content}  of cell $(i,j)\in\lambda$ is defined by
$$c_{O}^{\lambda}(i,j)=\begin{cases} \lambda_i+\lambda_j-i-j \qquad \text{if $i\geq j$} \\
i+j-\lambda_i'-\lambda_j'-2 \,\,\, \text{if $i<j$}.\end{cases}$$ 
The symplectic and orthogonal counterparts to Stanley's hook-content formula are,  respectively, \cite{EK, S}
$$\dim_{Sp}(\lambda,2n)=\prod_{u\in\lambda}\frac{2n+c_{sp}^{\lambda}(u)}{h^{\lambda}(u)} \qquad \text{and} \qquad
\dim_{SO}(\lambda,n)=\prod_{u\in\lambda}\frac{n+c_{O}^{\lambda}(u)}{h^{\lambda}(u)}.$$ 

\smallskip
\noindent
We are now ready to introduce the conjectures listed in \cite{A12}.  The author was inspired by the remarkable hook-length identity of Nekrasov and Okounkov \cite[(6.12)]{NO06}  (later given
a more elementary proof by Han \cite{H10})
$$\sum_{n\geq0}q^n\sum_{\lambda\vdash n}\prod_{u\in\lambda}\frac{t+(h^{\lambda}(u))^2}{(h^{\lambda}(u))^2}=\prod_{j\geq1}\frac1{(1-q^j)^{t+1}},$$
and  Stanley's  analogous hook-content identity \cite[Theorem 2.2]{RPS10}
$$\sum_{n\geq0}q^n\sum_{\lambda\vdash n}\prod_{u\in\lambda}\frac{t+(c^{\lambda}(u))^2}{(h^{\lambda}(u))^2}=\frac1{(1-q)^t}.$$

\smallskip
\noindent
\begin{conjecture} [\cite{A12}, Conjecture 6.2(a)] \label{conj6.2a} For $t$ an indeterminate, we have 
$$\sum_{n\geq0}q^n\sum_{\lambda\vdash n}\prod_{u\in\lambda}\frac{t+c_{sp}^{\lambda}(u)}{h^{\lambda}(u)}=
\prod_{j\geq1}\frac{(1-q^{8j})^{\binom{t+1}2}}{(1-q^{8j-2})^{\binom{t+1}2-1}}
\left(\frac{1-q^{4j-1}}{1-q^{4j-3}}\right)^t
\left(\frac{1-q^{8j-4}}{1-q^{8j-6}}\right)^{\binom{t}2-1}.$$ \end{conjecture}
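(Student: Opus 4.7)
The plan is to view the identity, at each $q^n$-coefficient, as an equality of polynomials in $t$, establish a degree bound on both sides, and then verify the identity at sufficiently many integer specializations of $t$.

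The first step is a degree bound. On the left, the coefficient of $q^n$ is a polynomial in $t$ of degree exactly $n$, since each $\lambda\vdash n$ contributes the monic polynomial $\prod_u(t+c_{sp}^\lambda(u))/h^\lambda(u)$ of degree $n$. On the right, the logarithm of the infinite product has $q$-coefficients that are polynomials in $t$ of degree at most $2$ (coming from $\binom{t+1}{2}$ and $\binom{t}{2}$), so after exponentiation the coefficient of $q^n$ is a polynomial in $t$ of degree at most $2n$. Hence verifying the identity at infinitely many integer values of $t$, uniformly in $n$, forces it as an identity of polynomials.

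The natural family of specializations is $t=2m$ for $m\in\mathbb{Z}_{>0}$. For partitions with $\ell(\lambda)\leq m$ the factor $\prod_u(2m+c_{sp}^\lambda(u))/h^\lambda(u)$ is exactly the symplectic dimension $\dim_{Sp}(\lambda,2m)$. For $\ell(\lambda)>m$ the same product is generally nonzero; one checks that the King modification rule expresses it as $\pm\dim_{Sp}(\mu,2m)$ for an associated $\mu$ with $\ell(\mu)\leq m$ (or zero). After collecting contributions, the left-hand side at $t=2m$ becomes a signed generating function of symplectic characters, to which one would apply the symplectic analogue of the Cauchy--Littlewood identity together with principal specialization to obtain a closed product form.

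Substituting $t=2m$ into the right-hand side yields exponents $m(2m+1)$ and $m(2m-1)$, the dimensions of $\mathfrak{sp}(2m)$ and $\mathfrak{so}(2m)$, and the product is organized by residues modulo $8$. The main obstacle is matching this residue-mod-$8$ product with the closed form arising from the symplectic principal specialization: bridging the two presentations will require careful Jacobi-triple-product manipulations (equivalently, Macdonald's $C_m^{(1)}$ identity), and the combinatorial bookkeeping of the King modification is the subtlest single step. Once the matching is established uniformly for all $m$, the degree bound in the first step proves the conjecture for all $t$; along the way one also recovers particular special cases such as $t=0$ or small positive integer $t$ by direct evaluation.
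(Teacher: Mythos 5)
This statement is an open conjecture: the paper proves only the case $t=0$ (Theorem \ref{conj6.2c}), by explicitly characterizing the partitions with all symplectic contents non-zero (Theorem \ref{non-zero cont}) and showing that each such partition contributes $\pm1$. Your outline does not close that gap. The interpolation framework in your first step is sound as a \emph{reduction} --- the coefficient of $q^n$ on each side is a polynomial in $t$, so it would suffice to verify the identity for all $t=2m$ with $m\in\mathbb{Z}_{>0}$ --- but everything after that is a plan rather than an argument. The two load-bearing claims are left unproved: (a) that for $\ell(\lambda)>m$ the product $\prod_u\bigl(2m+c_{sp}^{\lambda}(u)\bigr)/h^{\lambda}(u)$ equals $\pm\dim_{Sp}(\mu,2m)$ for an associated $\mu$ with $\ell(\mu)\le m$ via King's modification rules; and (b) that the resulting signed, $q^{|\lambda|}$-weighted sum of symplectic dimensions collapses, via a Littlewood-type identity and Jacobi-triple-product manipulations, to the stated residue-mod-$8$ product. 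Claim (a) is not a formal consequence of the modification rules, which govern characters defined by Weyl/determinantal formulas rather than this hook-content product evaluated outside its range of validity $\ell(\lambda)\le m$; it would need its own proof. Claim (b) is exactly where the difficulty of the conjecture lives --- you yourself flag it as ``the main obstacle'' and ``the subtlest single step'' --- and nothing in the proposal establishes it.

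A sanity check on the strategy: at $m=0$ your scheme would require every nonempty partition to be handled by the modification rule, whereas the paper's $t=0$ proof shows that the surviving contributions come from a delicate structural characterization (all nested hooks $(a,1^b)$ satisfy $a=b$), with signs determined by $n\bmod 4$. None of that structure is visible in your outline, which suggests the bookkeeping in (a)--(b) is substantially harder than the proposal acknowledges. As written, this is a reasonable research programme, not a proof.
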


\begin{conjecture} [\cite{A12}, Conjecture 6.2(b)] \label{conj6.2b} For $t$ an indeterminate, we have 
$$\sum_{n\geq0}q^n\sum_{\lambda\vdash n}\prod_{u\in\lambda}\frac{t+c_{O}^{\lambda}(u)}{h^{\lambda}(u)}=
\prod_{j\geq1}\frac{(1-q^{8j})^{\binom{t}2}}{(1-q^{8j-6})^{\binom{t}2-1}}
\left(\frac{1-q^{4j-1}}{1-q^{4j-3}}\right)^t
\left(\frac{1-q^{8j-4}}{1-q^{8j-2}}\right)^{\binom{t+1}2-1}.$$ \end{conjecture}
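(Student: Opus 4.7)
My plan is to attack Conjecture 6.2(b) via small-$t$ specializations and combinatorial bijections, paralleling the symplectic approach for Conjecture 6.2(a).

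First, at $t=0$ the LHS becomes $\sum_{\lambda}q^{|\lambda|}\prod_u c_O^\lambda(u)/h^\lambda(u)$, supported on partitions $\lambda$ whose orthogonal contents are all nonzero, while the RHS simplifies to $\prod_{j\ge1}(1-q^{8j-6})(1-q^{8j-2})/(1-q^{8j-4})$. Substituting $Q=q^2$ and applying Euler, this product equals $\prod_k(1+(-Q)^k)$, whose coefficient of $q^n$ is $(-1)^{n/2}\,p(n\mid\text{distinct even parts})$ for even $n$ and $0$ otherwise. The target identity then amounts to (i) showing that admissible $\lambda$ of size $n$ are equinumerous with partitions of $n$ into distinct even parts, and (ii) proving the weight $\prod_u c_O^\lambda(u)/h^\lambda(u)$ on each admissible $\lambda$ equals $(-1)^{|\lambda|/2}$. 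For (i), I would classify the zeros of $c_O^\lambda$ via $c_O^\lambda(i,i)=2(\lambda_i-i)$, $\lambda_i+\lambda_j=i+j$ (for $i>j$), and $\lambda_i'+\lambda_j'=i+j-2$ (for $i<j$), and then construct an explicit bijection to distinct-even-part partitions. For (ii), the weight would be computed either by induction on cell count or by exploiting the specific shape constraints from (i).

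Next, the $t=1$ case gives RHS $=\prod_{j\ge1}(1-q^{8j-6})(1-q^{4j-1})/(1-q^{4j-3})$; the weight vanishes only when some $c_O^\lambda(u)=-1$, a weaker restriction admitting more $\lambda$. The same combination of zero-locus analysis, bijection, and weight computation should apply; higher integer $t$ proceed analogously but with progressively more admissible partitions to track.

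The general case with indeterminate $t$ is the hard part: the exponents $\binom{t}{2}$ and $\binom{t+1}{2}$ exhibit genuine degree-two dependence on $t$, so no finite batch of integer specializations pins the identity down without a global argument. A plausible uniform strategy is a Littlewood decomposition (the modulo-$8$ structure of the RHS suggests a $4$-quotient), splitting hook lengths by residue class and tracking how orthogonal contents split accordingly, to reduce the LHS to a product over $4$-cores times a factor over the $4$-quotient. The principal obstacle is that, unlike the Nekrasov--Okounkov setting where only (squared) hook lengths appear and the $s$-quotient separates them cleanly, the orthogonal contents do not partition nicely under a quotient; bridging this gap will likely require an orthogonal analog of Macdonald's affine $\widetilde{D}_n$ identities that does not appear to be presently available in the literature.
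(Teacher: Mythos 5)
The statement you were given is, in the paper, an \emph{open conjecture}: the authors prove only the case $t=0$ (their theorem evaluating $\sum_{\lambda\vdash n}\prod_{u\in\lambda}c_{O}^{\lambda}(u)/h^{\lambda}(u)$) and explicitly leave the indeterminate-$t$ identity unresolved. Your proposal is in the same position, and you say so candidly. Your $t=0$ analysis is correct and agrees with the paper's result: the right-hand side does reduce to $\prod_{j\geq1}(1-q^{8j-6})(1-q^{8j-2})/(1-q^{8j-4})=\prod_{j\geq1}(1+q^{4j-2})^{-1}$, whose coefficient of $q^n$ is $(-1)^{n/2}\,p(n\mid\text{distinct even parts})$ for even $n$ and $0$ otherwise, and the left-hand side is supported on partitions with all orthogonal contents nonzero, each contributing weight $(-1)^{n/2}$. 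The one structural difference is that the paper never analyzes the orthogonal contents directly: it characterizes the partitions with all \emph{symplectic} contents nonzero (those whose nested hooks $(a,1^b)$ satisfy $a=b$, proved by induction on the Durfee square via the outer-hook recursion), shows that $c_{sp}^{\lambda}(u)=\pm h^{\lambda}(u)$ on such partitions, and then transfers the whole computation to the orthogonal side through the conjugation identity $c_{sp}^{\lambda}(i,j)=-c_{O}^{\lambda'}(j,i)$ together with the vanishing of the sum in odd degree. Your plan to classify the zero locus of $c_{O}^{\lambda}$ from scratch would work, but it duplicates effort that the symplectic characterization already supplies for free.

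The gap is the general case. Your observation that the exponents $\binom{t}{2}$ and $\binom{t+1}{2}$ depend quadratically on $t$, so that no finite collection of integer specializations can pin down the identity, is exactly right, and the proposed Littlewood-type $4$-quotient decomposition is a plausible guess at a uniform mechanism; but, as you yourself note, the orthogonal contents do not split along a quotient the way hook lengths do, and no substitute for the missing Macdonald-type input is supplied. So your proposal establishes (in outline) only the $t=0$ specialization --- which is all the paper establishes as well --- and the conjecture as stated remains unproved on both sides.
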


\smallskip
\noindent In this article, we prove the case $t=0$ of Conjectures \ref{conj6.2a} and \ref{conj6.2b}. 

\smallskip
\noindent 
The next conjecture from \cite{A12} is a symplectic, respectively orthogonal, counterpart to the hook-content identity of Stanley, which is also  in the spirit of Nekrasov-Okounkov's hook formula.

\begin{conjecture} [\cite{A12}, Conjecture 6.3(a)] \label{conj6.3a} For $t$ an indeterminate, we have 
$$\sum_{n\geq0}q^n\sum_{\lambda\vdash n}\prod_{u\in\lambda}\frac{t+(c_{sp}^{\lambda}(u))^2}{(h^{\lambda}(u))^2}=
\prod_{j\geq1}\frac1{(1-q^{4j-2})(1-q^j)^t}=
\sum_{n\geq0}q^n\sum_{\lambda\vdash n}\prod_{u\in\lambda}\frac{t+(c_{O}^{\lambda}(u))^2}{(h^{\lambda}(u))^2}.$$ \end{conjecture}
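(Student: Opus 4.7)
The plan is to follow the polynomial-interpolation strategy that underlies Han's proof of Nekrasov--Okounkov, adapted to the symplectic (and orthogonal) content. The first observation is that both sides of the conjectured identity are formal power series in $q$ whose coefficient of $q^n$ is a polynomial in $t$ of degree at most $n$. On the LHS, each $\lambda\vdash n$ contributes $\prod_u(t+(c_{sp}^\lambda(u))^2)$, a polynomial of degree $n$ in $t$, divided by the $t$-free quantity $\prod_u(h^\lambda(u))^2$; on the RHS, the expansion
\[
\prod_{j\geq 1}(1-q^j)^{-t}=\exp\!\Bigl(t\sum_{j,k\geq 1}q^{jk}/k\Bigr)
\]
shows that its $q^n$-coefficient is a polynomial of degree $\leq n$ in $t$, while $\prod_j(1-q^{4j-2})^{-1}$ is $t$-free. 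Hence it is enough to establish the identity for $t$ in any infinite set, for instance $t\in\mathbb Z_{\geq 0}$; the same reasoning covers the orthogonal sum.

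Fixing $t=m\in\mathbb Z_{\geq 0}$, the RHS is the generating function for ordered pairs $(\mu,\nu)$, where $\mu$ is a partition into distinct even parts (equivalently, into parts $\equiv 2\pmod 4$) and $\nu$ is an $m$-coloured partition, weighted by $q^{|\mu|+|\nu|}$. I would try to match this to the LHS by decomposing each $\lambda$ as a pair ``(symplectic core, symplectic quotient)'' in such a way that the sum over cores produces $\prod_j(1-q^{4j-2})^{-1}$ via a Macdonald/Jacobi identity of appropriate affine type (most likely $\tilde C$ or $\tilde{BC}$, given the shape of the product) and the sum over quotients produces $\prod_j(1-q^j)^{-m}$ in the manner of Han's treatment of the $t$-quotient. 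Writing $m+(c_{sp}^\lambda(u))^2=(c_{sp}^\lambda(u)-z)(c_{sp}^\lambda(u)+z)$ with $z^2=-m$ suggests viewing each linear factor through the dimension formula $\dim_{Sp}(\lambda,2n)=\prod_u(2n+c_{sp}^\lambda(u))/h^\lambda(u)$ and invoking a symplectic analogue of the Cauchy identity to bridge the sum and product sides.

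The base case $t=0$, established in the present paper as the equinumeracy of partitions of $n$ with all symplectic contents nonzero with partitions of $n$ into distinct even parts, both confirms the ansatz and supplies the initial step of an induction on $m$. The conjectured symplectic--orthogonal symmetry on the LHS gives a further structural check, since a proof routed through a single Macdonald identity should specialize in parallel to both sums.

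The principal obstacle will be the core--quotient step: unlike the ordinary hook length, the square of the symplectic content does not transform simply under the removal of a ribbon, so the standard $t$-core/$t$-quotient bijection is not directly applicable. Finding the correct symplectic analogue --- plausibly involving the doubled or folded Young diagrams that underlie symplectic/orthogonal character theory --- and proving that $\prod_u(t+(c_{sp}^\lambda(u))^2)/(h^\lambda(u))^2$ factors cleanly along this decomposition will be the technical heart of the argument.
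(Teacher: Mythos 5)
The statement you are addressing is presented in the paper as a conjecture: the authors prove only the specializations $t=0$ (via the characterization of partitions with all symplectic contents nonzero, Theorem \ref{non-zero cont} and Corollary \ref{cc=h}) and $t=-1$ (via the characterization of $sy\mathcal P_{\pm 1}$ as staircases), and they explicitly remark that even the case $t=2$ remains open. So there is no full proof in the paper to compare against; the question is whether your outline closes the gap, and it does not.

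Your reduction to $t=m\in\mathbb Z_{\geq 0}$ by polynomial interpolation is sound and standard (the coefficient of $q^n$ on each side is a polynomial in $t$ of degree at most $n$), and your reading of the right-hand side at $t=m$ as pairs consisting of a partition into distinct even parts and an $m$-coloured partition is correct. But everything after that is a plan rather than an argument. The entire content of the conjecture lives in the step you defer: a ``symplectic core--quotient'' decomposition of an arbitrary $\lambda$ under which $\prod_{u}\bigl(t+(c_{sp}^{\lambda}(u))^2\bigr)/(h^{\lambda}(u))^2$ factors as a core weight times a quotient weight. No such bijection is known; Han's $t$-core/$t$-quotient machinery controls hook lengths, not symplectic contents, and, as you yourself note, $(c_{sp}^{\lambda}(u))^2$ does not transform predictably under ribbon removal. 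Two further soft spots: (i) $\prod_{j\geq 1}(1-q^{4j-2})^{-1}$ is simply Euler's generating function for partitions into parts $\equiv 2\pmod 4$ (equivalently, distinct even parts); it is not of the theta-sum-equals-eta-product shape of a Macdonald identity, so the appeal to affine type $\tilde C$ or $\tilde{BC}$ is not backed by anything concrete; (ii) the proposed ``induction on $m$'' has no inductive step --- passing from $m$ to $m+1$ multiplies the product side by $\prod_{j}(1-q^j)^{-1}$, but you give no operation on the sum over partitions that realizes this. As it stands, the proposal correctly identifies the base case $t=0$ (which the paper does establish) and correctly names the obstruction, but it does not constitute a proof of the conjecture.
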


\smallskip
\noindent We  prove the cases $t=0$ and  $t=-1$ of Conjecture \ref{conj6.3a}.

\smallskip
\noindent The special cases of the above conjectures will be proved in section \ref{no-thms}. We denote by $sy\mathcal P_0$ the set of  partitions $\lambda$ such that $c^{\lambda}_{sp}(u)\neq 0$ for all cells $u\in \lambda$. To prove the case $t=0$ of Conjectures \ref{conj6.2a}, \ref{conj6.2b} and \ref{conj6.3a},  we describe the partitions in $sy\mathcal P_0$ explicitly. 
The {\it nested hooks} of a partition $\lambda$ are hook  partitions whose Young diagrams consist of a cell $(i,i)$ on the main diagonal of $\lambda$ together with all cells directly to the right and directly below the cell $(i,i)$. Thus, the $i^{th}$-hook of $\lambda$ is the partition $(\lambda_i-i+1, 1^{\lambda'_i-i})$. In section \ref{no-thms} we prove the following characterization. 

\begin{theorem} \label{non-zero cont} $\lambda\in sy\mathcal P_0$ if and only if $\lambda=\emptyset$ or all nested hooks $(a, 1^b)$ of $\lambda$ satisfy $a=b$. 

\end{theorem}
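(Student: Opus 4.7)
The plan is to analyze $c^\lambda_{sp}$ regionally relative to the Durfee square of $\lambda$. Let $d$ denote the Durfee size and set $\alpha_k = \lambda_k - k$, $\beta_k = \lambda'_k - k$ for $1 \le k \le d$, $\gamma_i = i - \lambda_i$, and $\psi(j) = j - \lambda'_j$. The nested-hook condition $a=b$ reads $\beta_k = \alpha_k + 1$ for every $k \le d$. A routine case check of the formula for $c^\lambda_{sp}$ shows that contents on all cells are manifestly nonzero except possibly in three scenarios: (i) the diagonal cell $(d,d)$ yields $-2\beta_d$; (ii) a cell $(i,j)$ with $i \le d < j \le \lambda_i$ yields $\psi(j) - \beta_i$; (iii) a cell $(i,j)$ with $j \le d < i \le \lambda'_j$ yields $\alpha_j + 2 - \gamma_i$. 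Thus $\lambda \in sy\mathcal P_0$ iff $\beta_d \ne 0$ and neither coincidence $\beta_i = \psi(j)$ nor $\gamma_i = \alpha_j + 2$ arises in scenarios (ii), (iii).

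The $(\Leftarrow)$ direction is then quick: under the hypothesis $\beta_k = \alpha_k + 1$ one has $\beta_d \ge 1$; in (ii) the cell's existence forces $\lambda'_j \ge i$, so $\psi(j) \le j - i \le \alpha_i < \beta_i$; and in (iii) the hypothesis gives $\lambda'_j = \lambda_j + 1$, so $i \le \lambda_j + 1$ and $\gamma_i \le \alpha_j + 1 < \alpha_j + 2$. For $(\Rightarrow)$, the key input is the image of $\psi$ (resp.\ $\gamma$) on the relevant intervals. Since $\lambda'_j$ is constant equal to $m$ precisely on the block $j \in \{\lambda_{m+1}+1, \ldots, \lambda_m\}$, a direct block-by-block computation gives
\[
\{\psi(j) : d < j \le \lambda_i\} = \{1, \ldots, \alpha_i\} \setminus \{\alpha_m + 1 : i < m \le d\},
\]
and symmetrically
\[
\{\gamma_i : d < i \le \lambda'_j\} = \{1, \ldots, \beta_j\} \setminus \{\beta_m + 1 : j < m \le d\}.
\]
Consequently $\lambda \in sy\mathcal P_0$ is equivalent to $\beta_d \ne 0$ together with the two avoidances $\beta_k \notin \{1, \ldots, \alpha_k\} \setminus \{\alpha_m+1 : k < m \le d\}$ and $\alpha_k + 2 \notin \{1, \ldots, \beta_k\} \setminus \{\beta_m+1 : k < m \le d\}$ for every $k \le d$.

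A downward induction on $k$ finishes the $(\Rightarrow)$ direction. At $k=d$ both exceptional sets are empty, and together with $\beta_d \ne 0$ the two avoidances force $\beta_d = \alpha_d + 1$. Inductively assume $\beta_m = \alpha_m + 1$ for every $m > k$. Then the exceptional set $\{\alpha_m + 1 : k < m \le d\}$ coincides with $\{\beta_m : k < m \le d\}$, whose entries lie strictly below $\beta_k$ by strict decrease of $(\beta_j)$; symmetrically $\{\beta_m + 1 : k < m \le d\} = \{\alpha_m + 2 : k < m \le d\}$ lies strictly below $\alpha_k + 2$. The exceptional entries are thus irrelevant, and the two avoidances collapse to $\beta_k = \alpha_k + 1$. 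The main technical step is the block-by-block identification of the image sets of $\psi$ and $\gamma$; once it is in hand the induction is automatic.
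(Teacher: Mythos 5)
Your proof is correct. I checked the regional case analysis: inside the Durfee square the content is $\alpha_i+\alpha_j+2\ge 2$ for $i>j$ and $-\beta_i-\beta_j\le -1$ for $i<j$ (since $\beta_i\ge j-i\ge 1$ there), so indeed only the corner $(d,d)$ can vanish; the arm and leg contents are $\psi(j)-\beta_i$ and $\alpha_j+2-\gamma_i$ as you state; and the block-by-block image computation $\{\psi(j): d<j\le\lambda_i\}=\{1,\dots,\alpha_i\}\setminus\{\alpha_m+1: i<m\le d\}$ is right (the block $\lambda_{m+1}<j\le\lambda_m$ contributes the interval $[\alpha_{m+1}+2,\alpha_m]$, leaving exactly the gaps $\alpha_m+1$). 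The downward induction then closes correctly; the only point you leave implicit is that for $k<d$ one needs $\beta_k\ge 1$ to pass from $\beta_k\notin\{1,\dots,\alpha_k\}$ to $\beta_k\ge\alpha_k+1$, but this is automatic since $\beta_k=\lambda_k'-k\ge d-k\ge 1$.

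This is a genuinely different route from the paper's. The paper proves the theorem by induction on the Durfee size, peeling off the \emph{outer} hook at each step: the key inputs are Remark \ref{R1} (removing the outer hook preserves the symplectic contents of the remaining cells, so the inner part of $\lambda$ is handled by the induction hypothesis) and the explicit classification of hook partitions in $sy\mathcal P_0$ (Corollary \ref{cor 0}); the forward direction then exhibits a single explicit zero-content cell in the first row or column whenever the outer hook is unbalanced. You instead work globally: you never invoke the hook-removal invariance, you locate all potentially vanishing contents relative to the Durfee square, and you determine the \emph{exact} value set of the arm and leg contents in each row and column before resolving the hooks from the Durfee corner outward (index $d$ down to $1$ --- the opposite order from the paper). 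The paper's argument is shorter and reuses its structural lemma elsewhere (e.g.\ in Theorem \ref{non-pm1 cont}); yours is more computational but yields strictly more information, namely the full spectrum of symplectic contents outside the Durfee square, and the same interval-avoidance framework would adapt with little change to characterizing $sy\mathcal P_t$ for other values of $t$.
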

\noindent Using the transformation of {\it straightening} nested hooks, i.e., transforming each nested hook  $(a, 1^b)$ into a part equal to $a+b$, one can easily see that partitions of $n$ in $sy\mathcal P_0$ are in bijection with partitions of $n$ into distinct even parts.  Therefore \begin{equation}\label{dist-parts} |sy\mathcal P_0(n)|=p(n \, \big| \text{ distinct even parts}). \end{equation}

\smallskip
\noindent In section \ref{beck-section}, we prove two Beck-type companion identities to \eqref{dist-parts}.  These would give combinatorial interpretations for the excess of the number of parts in all partitions in $sy\mathcal P_0(n)$ over the number of parts in all partitions in $\mathcal P(n \, \big| \text{ distinct even parts})$. The combinatorial description in our first Beck-type identity  involves partitions with even rank. In section \ref{odd-even rank}, we examine the parity of $p(n\mid \text{distinct parts, odd rank})$ and $p(n\mid \text{distinct parts, even rank})$. In section \ref{bi-hooks}, we will investigate the connection between the sum of all hook-lengths of a partition $\lambda$ and the sum of all inversions in the binary representation of $\lambda$.  Finally, in section \ref{two stats}, we study the $x$-ray list of a partition, an analogue to the  $x$-ray of a permutation \cite{BMPS}, and we determine links with $p(n \mid \text{distinct parts})$ and also with the number of partitions maximally contained in a given staircase partition.

\section{Proofs of Theorem  \ref{non-zero cont} and special cases of  Conjectures \ref{conj6.2a}, \ref{conj6.2b} and \ref{conj6.3a}}\label{no-thms}

\noindent In this section, we first prove Theorem \ref{non-zero cont} and derive two corollaries which lead to the proofs for the case $t=0$ of Conjectures  \ref{conj6.2a}, \ref{conj6.2b} and \ref{conj6.3a}. We then conclude by proving the case $t=-1$ of Conjecture \ref{conj6.3a}. 

\smallskip
\noindent We start with basic observations about the symplectic and orthogonal contents of cells in partitions. 

\begin{remark} \label{remark sp-o} The definitions imply that, for any partition $\lambda$ and any cell $(i,j)\in \lambda$, we have $$c^\lambda_{sp}(i,j)=-c^{\lambda'}_O(j,i).$$

\smallskip
\noindent  Moreover, since $h^\lambda(i,j)=-h^{\lambda'}(j,i)$, for any positive integer $n$, we have 
$$\sum_{\lambda\vdash n}\prod_{u\in\lambda}\frac{c_{sp}^{\lambda}(u)}{h^{\lambda}(u)}=
\sum_{\lambda\vdash n}\prod_{u\in\lambda}\frac{c_{O}^{\lambda}(u)}{h^{\lambda}(u)}.$$ 

\end{remark}

\smallskip
\noindent Given a partition $\lambda$, by the {\it outer hook} of $\lambda$ we mean the hook determined by the cell $(1,1)$, i.e., the partition  $(\lambda_1, 1^{\lambda'_1-1})$.

\begin{remark} \label{R1}  Let $\mu$ be the partition obtained from $\lambda$ by removing the outer hook.  Then, it is immediate from the definitions that 
$$c^\mu _{sp}(i,j)=c^\lambda _{sp}(i+1,j+1) \qquad \text{and} \qquad c^\mu _{O}(i,j)=c^\lambda _{O}(i+1,j+1).$$
Thus, the symplectic and orthogonal contents of a cell are preserved by the operation of adding or removing an outer hook.
\end{remark}

\smallskip
\noindent The \textit{rank}, $r(\lambda)$,  of a partition $\lambda$ is defined as the difference between the largest part and the length of the partition, i.e., $r(\lambda)=\lambda_1-\lambda'_1$. We denote by $r_j(\lambda)$ the rank of the partition obtained from $\lambda$ by removing the first $j-1$ outer hooks, successively. Thus, $r_j(\lambda)=\lambda_j-\lambda'_j$.

\begin{remark}\label{remark c-h}  For any partition $\lambda$ and any cell $(i,j)\in \lambda$, we have $$c^\lambda_{sp}(i,j)=\begin{cases} r_j(\lambda)+1+h^\lambda(i,j) & \text{ if } i>j\\ r_i(\lambda)+1-h^\lambda(i,j) & \text{ if } i\leq j.\end{cases}  $$
\end{remark}

\smallskip
\noindent Next, we determine explicitly the symplectic contents of cells in hook partitions $\lambda=(a, 1^b)$ with $a\geq 1, b\geq 0$. In detail, we have
\begin{align*}c^\lambda_{sp}(1,1)& =-2b, & \  \\ c^\lambda_{sp}(1,j)&=-b+j-1 & \text{ for } &2\leq j\leq a,\\ c^\lambda_{sp}(i,1)&=a-i+2 & \text{ for } &2\leq i\leq b+1.
\end{align*} The contents $c^\lambda_{sp}(1,j)$, for $j\geq 2$, are seen as consecutive integers between $-b+1$ and $a-b-1$, increasing from left to right. The contents 
$c^\lambda_{sp}(i,1)$, for $i\geq 2$, are consecutive integers between $a-b+1$ and $a$ decreasing from top to bottom. These observations lead to the following characterization of hook partitions when all symplectic contents are unequal to a fixed integer $t$. 

\newpage

\begin{proposition}\label{not t} Let $t$ be an integer. A hook partition $\lambda=(a, 1^b)$, with $a\geq 1, b\geq 0$, satisfies $c^\lambda_{sp}(i,j)\neq t$ for all cells $(i,j)\in \lambda$ if and only if  all of  the following conditions is satisfied: 
\begin{itemize}
\item[(i)] if $b\neq -t/2$

\item[(ii)] if $a\geq 2$, then $b<1-t$ or $a-b-1<t$;

\item[(iii)] if $b\geq 1$, then $t<a-b+1$ or $a<t$.
\end{itemize}
\end{proposition}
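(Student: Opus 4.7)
The plan is to unpack, cell by cell, the condition $c^\lambda_{sp}(i,j) \neq t$ using the explicit formulas for the symplectic contents of a hook partition already listed immediately above the proposition. Since the cells of $\lambda = (a, 1^b)$ partition naturally into three families — the corner $(1,1)$, the remaining first-row cells $(1,j)$ with $2 \leq j \leq a$ (present only when $a \geq 2$), and the first-column cells $(i,1)$ with $2 \leq i \leq b+1$ (present only when $b \geq 1$) — the global condition breaks into three independent subconditions matching (i), (ii), and (iii) respectively. I would state this decomposition first and then handle each family in turn.

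For the corner cell, $c^\lambda_{sp}(1,1) = -2b$, so requiring $-2b \neq t$ is equivalent to $b \neq -t/2$, which is (i). For the remaining first-row cells, as $j$ ranges from $2$ to $a$, the values $c^\lambda_{sp}(1,j) = -b+j-1$ sweep out the consecutive integers $-b+1, -b+2, \ldots, a-b-1$ in increasing order; avoiding $t$ throughout this range is equivalent to $t < -b+1$ or $t > a-b-1$, i.e., $b < 1-t$ or $a-b-1 < t$, which is (ii). For the first-column cells below the corner, as $i$ ranges from $2$ to $b+1$, the values $c^\lambda_{sp}(i,1) = a-i+2$ sweep out the consecutive integers $a, a-1, \ldots, a-b+1$ in decreasing order; avoiding $t$ is equivalent to $t < a-b+1$ or $t > a$, which is (iii).

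Conjoining the three necessary-and-sufficient conditions on the three disjoint families yields the claimed characterization. I expect no substantive obstacle: the proof is essentially a bookkeeping exercise verifying that the integer ranges described above are indeed traversed without repetition and in the asserted order, both of which are immediate from the linear formulas for $c^\lambda_{sp}(1,j)$ and $c^\lambda_{sp}(i,1)$ in the hook. The only mild subtlety is to remember to include the qualifiers \textquotedblleft if $a \geq 2$\textquotedblright\ and \textquotedblleft if $b \geq 1$\textquotedblright\ in (ii) and (iii), since otherwise those families are empty and impose no constraint.
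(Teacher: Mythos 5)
Your proof is correct and matches the paper's approach exactly: the paper treats the proposition as an immediate consequence of the explicit content formulas and the observation that the first-row and first-column contents form consecutive integer ranges, which is precisely the decomposition and bookkeeping you carry out.
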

\noindent The cases needed for our proofs are given in the next two corollaries. Recall that $sy\mathcal P_0$ denotes the set of partitions having all symplectic contents non-zero. We also denote by $sy\mathcal P_{\pm1}$ the set of partitions with all symplectic contents not equal to $\pm 1$. 

\begin{corollary} \label{cor 0} For $a\geq 1, b\geq 0$, the hook partition $\lambda =(a, 1^b)$ is in $sy\mathcal P_0$ if and only if $a=b$. 
\end{corollary}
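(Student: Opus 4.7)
The plan is to specialize Proposition \ref{not t} to $t=0$ and check that the three listed conditions there collapse to the single requirement $a=b$. I would proceed in two directions.

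For the forward direction, suppose $\lambda=(a,1^b)\in sy\mathcal P_0$. Condition (i) at $t=0$ reads $b\neq 0$, so $b\geq 1$. With this in hand, condition (ii) (applicable when $a\geq 2$) becomes ``$b<1$ or $a-b-1<0$'', and the first alternative is ruled out by $b\geq 1$, forcing $a\leq b$. Condition (iii) (now applicable since $b\geq 1$) becomes ``$0<a-b+1$ or $a<0$'', and the second alternative is impossible since $a\geq 1$, forcing $a\geq b$. Combining yields $a=b$. The degenerate subcase $a=1$, $b\geq 1$ is governed by (iii) alone and again forces $a=b=1$, while $a=1$, $b=0$ is ruled out by (i) (consistent with $\lambda=(1)$ having its unique cell of symplectic content $-2b=0$). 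For the converse, if $a=b\geq 1$ I verify (i)--(iii) directly: $b=a\geq 1\neq 0$; when $a\geq 2$ one has $a-b-1=-1<0$; and since $b\geq 1$ one has $a-b+1=1>0$.

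As an alternative self-contained argument, one can bypass Proposition \ref{not t} and read the conclusion straight off the three explicit families of symplectic contents listed just before it: the corner value $-2b$, the arm values $-b+1,\ldots,a-b-1$, and the leg values $a,a-1,\ldots,a-b+1$. Requiring each family to avoid $0$ gives respectively $b\geq 1$, either $a\leq b$ or the empty-arm case $a=1$, and either $a\geq b$ or the empty-leg case $b=0$; intersecting these forces $a=b\geq 1$.

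The main (really the only) obstacle is the book-keeping over the degenerate cases where the arm ($a=1$) or the leg ($b=0$) of the hook is empty, since then one of conditions (ii) or (iii) becomes vacuous; once these are handled, the corollary is immediate from the preceding proposition.
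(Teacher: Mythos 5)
Your proof is correct and follows essentially the same route as the paper: specialize Proposition \ref{not t} to $t=0$, note that (i) forces $b\geq 1$, and then combine (ii) and (iii) (with separate attention to the degenerate case $a=1$) to conclude $a=b$, checking the converse directly. The extra self-contained reading of the explicit content lists is a fine sanity check but adds nothing beyond the paper's argument.
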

\begin{proof} Choose $t=0$ in Proposition \ref{not t} and suppose $(a, 1^b)\in sy\mathcal P_0$. Proposition \ref{not t}(i) implies $b\geq 1$. If $a=1$, by Proposition \ref{not t}(iii), we gather $b<2$. Therefore $b=1=a$. If $a\geq 2$, then, by (ii) and (iii) of  Proposition \ref{not t}, $a=b$. Conversely, if $a=b$, then the conditions of Proposition \ref{not t} are satisfied. 
\end{proof}

\begin{corollary} \label{cor 1} The only hook partitions  in $sy\mathcal P_{\pm 1}$ are $(1)$ and $(2,1)$. 
\end{corollary}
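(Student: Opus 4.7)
The plan is to invoke Proposition \ref{not t} with both $t = 1$ and $t = -1$, since the condition $\lambda \in sy\mathcal{P}_{\pm 1}$ means that no cell has symplectic content equal to $1$ or to $-1$. Thus a hook $(a, 1^b)$ with $a \geq 1$, $b \geq 0$ lies in $sy\mathcal{P}_{\pm 1}$ if and only if it satisfies all three parts of Proposition \ref{not t} for both values of $t$ simultaneously.

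I would then unpack these conditions. For $t = 1$: part (i), $b \neq -1/2$, is automatic; part (ii), when $a \geq 2$, reduces to $a \leq b + 1$ (the alternative $b < 0$ is impossible); part (iii), when $b \geq 1$, reduces to $a \geq b + 1$ (the alternative $a < 1$ is impossible). For $t = -1$: part (i) is again automatic; part (ii), when $a \geq 2$, reads $b \leq 1$ or $a \leq b - 1$; part (iii), when $b \geq 1$, gives $a \geq b - 1$, which will be automatic in the cases that survive.

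With the conditions in this form, I would split into four cases according to whether $a = 1$ or $a \geq 2$ and whether $b = 0$ or $b \geq 1$. The case $a = 1$, $b = 0$ gives $\lambda = (1)$ with every condition vacuous. The case $a = 1$, $b \geq 1$ is ruled out by the $t = 1$ part (iii), which would demand $a \geq b + 1 \geq 2$; the case $a \geq 2$, $b = 0$ is ruled out by the $t = 1$ part (ii), which would demand $a \leq 1$. In the remaining case $a \geq 2$, $b \geq 1$, the $t = 1$ conditions together force $a = b + 1$, whereupon $a \leq b - 1$ is impossible and the $t = -1$ condition (ii) forces $b \leq 1$, yielding $b = 1$ and $a = 2$, i.e., $\lambda = (2, 1)$. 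There is no serious obstacle beyond this case work, since Proposition \ref{not t} has already done the heavy lifting; a direct check of the contents confirms that $(1)$ has contents $\{0\}$ and $(2, 1)$ has contents $\{-2, 0, 2\}$, neither of which contains $\pm 1$.
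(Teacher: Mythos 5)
Your proof is correct and follows essentially the same route as the paper: specialize Proposition \ref{not t} to $t=1$ and $t=-1$ and run the case analysis on $a=1$ versus $a\geq 2$ and $b=0$ versus $b\geq 1$. If anything, your handling of the $t=-1$ case is slightly more careful than the paper's (you note that condition (ii) can be met via $b\leq 1$ rather than only via $a\leq b-1$, which is exactly how $(2,1)$ survives), but the argument is the same.
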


\begin{proof} The two partitions $(1)$ and $(2,1)$ satisfy the conditions of Proposition \ref{not t} with $t=\pm 1$. 
Consider $(a, 1^b)$ with $a\geq 1, b\geq 0$ and $(a, 1^b)\not \in \{(1),(2,1)\}$. Condition (i)  is satisfied. If $b=0$, condition (ii) is not satisfied for any $a\geq 2$.  If $a=1$, then condition (iii) is not satisfied for any $b\geq 1$.  Suppose $a\geq 2$ and $b\geq 1$. Conditions (ii) and (iii)   imply $a=b+1$ if $t=1$, and imply $a=b-1$ if $t=-1$. Thus, the only hook partitions in $sy\mathcal P_{\pm 1}$ are $(1)$ and $(2,1)$. 
\end{proof}
\noindent Before we prove Theorem \ref{non-zero cont} we introduce one more concept.  The {\it Durfee square} of a partition $\lambda$ is the largest partition of the form $(m^m)$ whose Young diagram fits inside the Young diagram of $\lambda$. The  length of the Durfee square of $\lambda$  equals the number of nested hooks of $\lambda$. 

\begin{proof}[Proof of Theorem \ref{non-zero cont}] Clearly $\emptyset\in sy\mathcal P_0$. If $\lambda \neq \emptyset$,  we prove by induction on the  length $m$ of the Durfee square that $\lambda \in sy\mathcal P_0$ if and only if  $a=b$. 

\smallskip
\noindent If $m=1$, the statement is true by Corollary \ref{cor 0}. 

\smallskip
\noindent  Next, assume that the statement of the theorem is true for every partition with Durfee square of length less than $k$ and let $\lambda$ be a partition with Durfee square of length $k$. Suppose $\lambda\in sy\mathcal P_0$. By Remark \ref{R1}, we only need to show that the outer hook, $(\lambda_1, 1^{\lambda'_1-1})$, satisfies $\lambda_1=\lambda'_1-1$. 

\smallskip

\noindent 
If $\lambda_1<\lambda'_1-1$,   we must have $\lambda_{\lambda_1+2}=1$. Otherwise, if $\mu$ is the partition obtained from $\lambda$ by removing the outer hook, we have $\mu'_1\geq \lambda_1+1\geq\mu_1+2$ which is impossible since $\mu \in sy\mathcal P_0$ and hence $\mu_1=\mu'_1-1$. 
Consequently,  we see that $c^\lambda_{sp}(\lambda_1+2, 1)=\lambda_{\lambda_1+2}+\lambda_1-(\lambda_1+2)-1+2= 0$.
 Similarly, if $\lambda_1>\lambda'_1-1$, we have  $\lambda'_{\lambda'_1}=1$ and thus $c_{sp}(1, \lambda'_1)=1+\lambda'_1-\lambda'_1-\lambda'_{\lambda'_1}=0$.
Therefore $\lambda_1=\lambda'_1-1$. 

\smallskip

\noindent Finally, if every nested hook $(a, 1^b)$ of $\lambda$ satisfies $a=b$, by induction, $c^\lambda_{sp}(i,j)\neq 0$ for all $i,j\geq 2$. Moreover, since $\lambda_1=\lambda'_1-1$, for $1\leq j\leq \lambda_1$ we have 
$$ c^{\lambda}_{sp}(1,j)=1+j-\lambda'_1-\lambda'_j\leq 1+\lambda_1-\lambda'_1-\lambda'_j=-\lambda'_j<0,$$
and for $2\leq i\leq \lambda'_1$ we have 
$ c^{\lambda}_{sp}(i,1)=\lambda_i+\lambda_1-i-1+2\geq\lambda_i+\lambda_1-\lambda'_1+1 \geq \lambda_i>0$.
Thus, $\lambda \in sy\mathcal P_0$.
\end{proof}

\begin{corollary}\label{Euler} For any positive integer $n$ we have  $$|sy\mathcal P_0(n)|=p(n \, \big| \text{ parts} \equiv 2\bmod 4).$$\end{corollary}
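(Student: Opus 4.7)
The plan is to reduce the claim to Euler's classical partition theorem. By equation \eqref{dist-parts}, which was just derived from Theorem \ref{non-zero cont} via the straightening operation on nested hooks, I already have
$$|sy\mathcal P_0(n)| = p(n \mid \text{distinct even parts}).$$
So the task is to prove the combinatorial identity
$$p(n \mid \text{distinct even parts}) = p(n \mid \text{parts} \equiv 2 \pmod 4).$$

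First I would observe that both quantities vanish when $n$ is odd, so write $n = 2m$. The map that halves every part is a size-preserving bijection $(2\mu_1, 2\mu_2, \ldots) \mapsto (\mu_1, \mu_2, \ldots)$ between partitions of $2m$ into distinct even parts and partitions of $m$ into distinct parts, giving
$$p(2m \mid \text{distinct even parts}) = p(m \mid \text{distinct parts}).$$
An analogous halving bijection converts a part $\equiv 2 \pmod 4$ into an odd part, yielding
$$p(2m \mid \text{parts} \equiv 2 \pmod 4) = p(m \mid \text{odd parts}).$$
Applying Euler's theorem $p(m \mid \text{distinct parts}) = p(m \mid \text{odd parts})$ finishes the argument.

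Alternatively, a one-line generating function calculation using $1+q^{2j} = (1-q^{4j})/(1-q^{2j})$ and the factorization $(1-q^{2j}) = (1-q^{4j})(1-q^{4j-2})$ gives directly
$$\sum_{n\geq 0} p(n \mid \text{distinct even parts})\, q^n = \prod_{j\geq 1}(1+q^{2j}) = \prod_{j\geq 1}\frac{1}{1-q^{4j-2}} = \sum_{n\geq 0} p(n \mid \text{parts} \equiv 2 \bmod 4)\, q^n.$$
I do not anticipate any real obstacle here: the statement is essentially a rescaled form of Euler's distinct-equals-odd identity, and the only nontrivial content was already packaged into Theorem \ref{non-zero cont} and the straightening bijection preceding \eqref{dist-parts}.
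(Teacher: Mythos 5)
Your proposal is correct and is exactly the paper's argument: the paper's proof also just combines \eqref{dist-parts} with Euler's identity ``distinct parts $=$ odd parts'' with $q$ replaced by $q^2$, which is precisely your halving-bijection/generating-function step. Nothing further is needed.
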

\begin{proof} Apply \eqref{dist-parts} and  Euler's identity  \cite[(1.2.5)]{A98} with $q$ replaced by $q^2$. 
\end{proof}

\smallskip

\noindent From Theorem \ref{non-zero cont} and Remark \ref{remark c-h} we have the following. 

\begin{corollary} \label{cc=h} For $\lambda\neq \emptyset$,  we have $\lambda \in sy\mathcal P_0$ if and only if  $$c^\lambda_{sp}(i,j)=\begin{cases} h^\lambda(i,j) & \text{ if } i>j \\ -h^\lambda(i,j) & \text{ if } i\leq j,\end{cases}  $$ for all $(i,j)\in \lambda$.
\end{corollary}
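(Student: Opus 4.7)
The plan is to observe that this is essentially a direct transcription of Theorem \ref{non-zero cont} through the identities of Remark \ref{remark c-h}. The reverse direction ($\Leftarrow$) is immediate: hook lengths satisfy $h^\lambda(i,j)\geq 1$, so the hypothesis forces $c^\lambda_{sp}(i,j)=\pm h^\lambda(i,j)\neq 0$ for every cell, whence $\lambda\in sy\mathcal P_0$.

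For the forward direction, I first translate the condition in Theorem \ref{non-zero cont} into a statement about ranks. The $k$-th nested hook of $\lambda$ is $(\lambda_k-k+1,\,1^{\lambda'_k-k})$, so the equality $a=b$ for this hook says $\lambda_k-k+1=\lambda'_k-k$, i.e.\ $r_k(\lambda)=\lambda_k-\lambda'_k=-1$. By Theorem \ref{non-zero cont}, $\lambda\in sy\mathcal P_0$ with $\lambda\neq\emptyset$ is equivalent to $r_k(\lambda)=-1$ for every $k=1,\dots,d$, where $d$ is the length of the Durfee square of $\lambda$.

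Now fix a cell $(i,j)\in\lambda$. If $i>j$, then $\lambda_j\geq \lambda_i\geq j$, so $(j,j)\in\lambda$ and $j\leq d$; symmetrically, if $i\leq j$, then $\lambda_i\geq j\geq i$ gives $(i,i)\in\lambda$ and $i\leq d$. Thus the rank that appears in Remark \ref{remark c-h} is always one we control. Substituting $r_j(\lambda)=-1$ (respectively $r_i(\lambda)=-1$) into
$$c^\lambda_{sp}(i,j)=\begin{cases} r_j(\lambda)+1+h^\lambda(i,j) & \text{if } i>j,\\ r_i(\lambda)+1-h^\lambda(i,j) & \text{if } i\leq j,\end{cases}$$
yields precisely $c^\lambda_{sp}(i,j)=h^\lambda(i,j)$ for $i>j$ and $c^\lambda_{sp}(i,j)=-h^\lambda(i,j)$ for $i\leq j$, as required.

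There is no real obstacle here: the corollary is a bookkeeping consequence of Theorem \ref{non-zero cont} once the ``$a=b$'' condition is rewritten as $r_k(\lambda)+1=0$. The only minor verification is that the row or column index used in Remark \ref{remark c-h} always lies in the Durfee square, which is automatic from the position of the cell relative to the main diagonal.
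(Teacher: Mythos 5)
Your proposal is correct and follows essentially the same route as the paper: rewrite the nested-hook condition $a=b$ of Theorem \ref{non-zero cont} as $r_k(\lambda)=-1$ for all $k$ up to the Durfee length, then substitute into Remark \ref{remark c-h}. Your extra checks (that the relevant index always lies in the Durfee square, and the direct $h^\lambda(i,j)\geq 1$ argument for the reverse implication) are details the paper leaves implicit, but they are correct and welcome.
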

\begin{proof} The assertion holds due to the fact that  all nested hooks $(a, 1^b)$ of $\lambda$ satisfy $a=b$ if and only if  $r_j(\lambda)=-1$ for all $j$ no larger than the length of the Durfee square of $\lambda$.  \end{proof}

\begin{remark} From Corollary \ref{cc=h},  if $\lambda \in sy\mathcal P_0$, we have $c^\lambda_{sp}(i,j)>0$ if $i> j$ and $c^\lambda_{sp}(i,j)<0$ if $i\leq j$.
\end{remark}
\noindent From Corollaries \ref{Euler} and \ref{cc=h} and Remark \ref{remark sp-o} we obtain the proof for the case $t=0$ of Conjecture \ref{conj6.3a}.

\begin{theorem} [\cite{A12}, Conjecture 6.3(b)] \label{conj6.3b} 
$$\sum_{n\geq0}q^n\sum_{\lambda\vdash n}\prod_{u\in\lambda}\frac{(c_{sp}^{\lambda}(u))^2}{(h^{\lambda}(u))^2}=
\prod_{j\geq1}\frac1{1-q^{4j-2}}=\sum_{n\geq0}q^n\sum_{\lambda\vdash n}\prod_{u\in\lambda}\frac{(c_{O}^{\lambda}(u))^2}{(h^{\lambda}(u))^2}.$$ \end{theorem}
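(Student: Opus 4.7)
The plan is to observe that this theorem follows essentially as an immediate corollary of the work already done (Theorem \ref{non-zero cont}, Corollary \ref{cc=h}, Corollary \ref{Euler}, Remark \ref{remark sp-o}), so the proof is short and structural rather than computational.

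First I would handle the symplectic side. The key observation is that for any partition $\lambda \notin sy\mathcal P_0$, some cell $u \in \lambda$ satisfies $c^\lambda_{sp}(u) = 0$, which forces $\prod_{u \in \lambda} (c^\lambda_{sp}(u))^2/(h^\lambda(u))^2 = 0$. Hence only partitions in $sy\mathcal P_0$ contribute to the inner sum. For such $\lambda$, Corollary \ref{cc=h} gives $c^\lambda_{sp}(i,j) = \pm h^\lambda(i,j)$ (positive or negative depending on whether $i > j$ or $i \leq j$), so after squaring, $(c^\lambda_{sp}(u))^2 = (h^\lambda(u))^2$ for every cell $u$, and the product collapses to $1$. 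Therefore
\[
\sum_{n \geq 0} q^n \sum_{\lambda \vdash n} \prod_{u \in \lambda} \frac{(c^\lambda_{sp}(u))^2}{(h^\lambda(u))^2} = \sum_{n \geq 0} |sy\mathcal P_0(n)| \, q^n,
\]
and then Corollary \ref{Euler} identifies the right-hand side with $\prod_{j \geq 1} 1/(1-q^{4j-2})$.

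Next I would handle the orthogonal side by reducing it to the symplectic case via conjugation. By Remark \ref{remark sp-o}, $c^\lambda_{sp}(i,j) = -c^{\lambda'}_O(j,i)$, and $h^\lambda(i,j) = h^{\lambda'}(j,i)$ (in absolute value — the squares are equal in either case). Therefore, under the bijection $\lambda \mapsto \lambda'$ on partitions of $n$,
\[
\sum_{\lambda \vdash n} \prod_{u \in \lambda} \frac{(c^\lambda_{sp}(u))^2}{(h^\lambda(u))^2} = \sum_{\lambda \vdash n} \prod_{u \in \lambda} \frac{(c^\lambda_O(u))^2}{(h^\lambda(u))^2},
\]
which is exactly the statement already recorded for the unsquared contents in Remark \ref{remark sp-o} and carries over verbatim upon squaring. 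This gives the second equality and completes the theorem.

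There is essentially no obstacle: every piece is already in place, and the main content of the proof is the vanishing argument combined with the identity $(c^\lambda_{sp})^2 = (h^\lambda)^2$ on $sy\mathcal P_0$. If anything requires care, it is just being explicit that the empty partition contributes the constant term $1$ on both sides (matching $n=0$ in the product), and that the conjugation symmetry holds cell-by-cell rather than only after summing, so that the squared-version of Remark \ref{remark sp-o} is immediate.
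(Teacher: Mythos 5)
Your proposal is correct and matches the paper's argument exactly: the paper derives this theorem directly from Corollaries \ref{Euler} and \ref{cc=h} and Remark \ref{remark sp-o}, via the same vanishing-outside-$sy\mathcal P_0$ observation, the collapse of the product to $1$ on $sy\mathcal P_0$, and conjugation for the orthogonal side. No gaps.
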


\smallskip
\noindent 
Next, we prove care $t=0$ of Conjectures \ref{conj6.2a} and \ref{conj6.2b}.

\begin{theorem} [\cite{A12}, Conjecture 6.2(c)] \label{conj6.2c} 
$$\sum_{n\geq0}q^n\sum_{\lambda\vdash n}\prod_{u\in\lambda}\frac{c_{sp}^{\lambda}(u)}{h^{\lambda}(u)}=
\prod_{j\geq1}\frac1{1+q^{4j-2}}=\sum_{n\geq0}q^n\sum_{\lambda\vdash n}\prod_{u\in\lambda}\frac{c_{O}^{\lambda}(u)}{h^{\lambda}(u)}.$$ \end{theorem}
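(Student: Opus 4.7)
The plan is to apply Theorem~\ref{non-zero cont} together with Corollary~\ref{cc=h} to collapse the inner sum to a signed enumeration of $sy\mathcal{P}_0(n)$, and then convert the resulting generating function into the desired infinite product via Euler's identity and a short algebraic manipulation. The orthogonal identity will follow immediately by Remark~\ref{remark sp-o}.

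First, if $\lambda \notin sy\mathcal{P}_0$, then some cell has vanishing symplectic content and its product term is zero, so only $\lambda \in sy\mathcal{P}_0(n)$ contribute. For such $\lambda$, Corollary~\ref{cc=h} identifies $c_{sp}^\lambda(i,j)/h^\lambda(i,j)$ with $+1$ when $i>j$ and $-1$ when $i\leq j$, so
$$\prod_{u\in\lambda}\frac{c_{sp}^\lambda(u)}{h^\lambda(u)} = (-1)^{N(\lambda)},$$
where $N(\lambda)$ counts cells $(i,j)\in\lambda$ with $i\leq j$. By Theorem~\ref{non-zero cont}, each of the $m$ nested hooks of such a $\lambda$ has the form $(a,1^a)$, contributing $a$ arm cells weakly above the main diagonal and $a$ leg cells strictly below. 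Hence $|\lambda| = 2\sum a_i$ is always even and $N(\lambda) = |\lambda|/2$; in particular the inner sum vanishes when $n$ is odd.

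For $n = 2k$, the straightening bijection described after Theorem~\ref{non-zero cont} gives $|sy\mathcal{P}_0(2k)| = p(k\mid\text{distinct parts})$. Euler's identity applied with $q\mapsto -q^2$ then yields
$$\sum_{n\geq0}q^n\sum_{\lambda\vdash n}\prod_{u\in\lambda}\frac{c_{sp}^{\lambda}(u)}{h^{\lambda}(u)} = \sum_{k\geq0}(-1)^k q^{2k}\,p(k\mid\text{distinct parts}) = \prod_{j\geq1}\bigl(1+(-1)^j q^{2j}\bigr).$$
Splitting by the parity of $j$ rewrites the right-hand side as $\prod_{k\geq1}(1+q^{4k})(1-q^{4k-2})$. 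Substituting $\prod_k(1+q^{4k}) = \prod_k (1-q^{8k})/(1-q^{4k})$ and observing that the multiples of $8$ together with the exponents $\equiv 4\pmod 8$ cover exactly the multiples of $4$, one obtains $\prod_{k\geq1}(1-q^{4k-2})/(1-q^{8k-4})$. Finally, the factorization $(1+q^{4k-2})(1-q^{4k-2}) = 1 - q^{8k-4}$ converts this into $\prod_{k\geq1} 1/(1 + q^{4k-2})$, as required.

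The orthogonal version is then immediate from Remark~\ref{remark sp-o}. The main obstacle is the bookkeeping in the middle paragraph: one must verify that within each nested hook $(a,1^a)$ exactly half of the cells lie weakly above the main diagonal, thereby pinning down the sign $(-1)^{n/2}$; once that is in hand, the remainder is a short product manipulation.
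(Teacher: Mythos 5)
Your argument is correct and follows the paper's own proof in all essentials: both reduce the inner sum to $sy\mathcal{P}_0(n)$, use Corollary \ref{cc=h} together with the structure of the nested hooks $(a,1^a)$ to evaluate the product as $(-1)^{n/2}$, and invoke Remark \ref{remark sp-o} for the orthogonal case. The only immaterial difference is the final step, where the paper interprets $\prod_{j\geq1}(1+q^{4j-2})^{-1}$ combinatorially as a $(-1)^{\ell(\lambda)}$-weighted count of partitions into parts $\equiv 2 \pmod 4$ (via Corollary \ref{Euler}), whereas you arrive at the same product by a direct algebraic manipulation of $\prod_{j\geq1}\bigl(1+(-1)^j q^{2j}\bigr)$.
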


\begin{proof}  From Theorem \ref{non-zero cont}, if $n$ is odd,  $sy\mathcal P_0(n)=\emptyset$. Assume $n$ is even. If $\lambda \in sy\mathcal P_0$, by Corollary \ref{cc=h}, for each $u\in \lambda$, we have $\displaystyle \frac{c^\lambda_{sp}(u)}{h^\lambda(u)}=\pm 1$. 
Since all nested hooks $(a, 1^b)$ of $\lambda$ satisfy $a=b$,  the number of cells $u\in \lambda$ with $\displaystyle \frac{c^\lambda_{sp}(u)}{h^\lambda(u)}=1$ equals $\displaystyle \frac{n}{2}$. Thus, if $n>0$ is even and $\lambda \vdash n$, 
$$\prod_{u\in\lambda}\frac{c_{sp}^{\lambda}(u)}{h^{\lambda}(u)}=\begin{cases} 1 & \text{ if } n\equiv 0\bmod 4\\  -1 & \text{ if } n\equiv 2\bmod 4.\end{cases}$$
On the other hand, $\displaystyle \prod_{j\geq 1}\frac{1}{1+q^{4j-2}}$ is the generating function for the number of partitions $\lambda\vdash n$ with parts congruent to $2\bmod 4$, each partition counted with weight $(-1)^{\ell(\lambda)}$. If $\lambda$ is such a partition, then 
$$(-1)^{\ell(\lambda)}=\begin{cases} 1 & \text{ if } n\equiv 0\bmod 4\\  -1 & \text{ if } n\equiv 2\bmod 4.\end{cases}$$ 
This proves the left-hand identity of Theorem \ref{conj6.2c}. The right-hand identity follows from Remark \ref{remark sp-o}. 
\end{proof}

\smallskip
\noindent 
Theorems \ref{conj6.2c} and \ref{conj6.3b} lead to the following identity as it was conjectured in \cite{A12}. 

\begin{corollary} [\cite{A12}, Conjecture 6.3(c)] \label{conj6.3c} For any positive integer $n$ we have
$$(-1)^{\binom{n}2}\sum_{\lambda\vdash n}\prod_{u\in\lambda}\frac{(c_{sp}^{\lambda}(u))^2}{(h^{\lambda}(u))^2}
=\sum_{\lambda\vdash n}\sum_{\lambda\vdash n}\prod_{u\in\lambda}\frac{c_{sp}^{\lambda}(u)}{h^{\lambda}(u)}.$$ \end{corollary}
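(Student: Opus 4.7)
The plan is to bypass the infinite products entirely and re-use the explicit $sy\mathcal P_0(n)$ evaluations that already appear inside the proofs of Theorems \ref{conj6.3b} and \ref{conj6.2c}. By Corollary \ref{cc=h}, every cell $u\in\lambda\in sy\mathcal P_0$ contributes a ratio $c_{sp}^\lambda(u)/h^\lambda(u)=\pm 1$, while partitions outside $sy\mathcal P_0$ make either product vanish. Hence
\[
\sum_{\lambda\vdash n}\prod_{u\in\lambda}\frac{(c_{sp}^\lambda(u))^2}{(h^\lambda(u))^2} \;=\; |sy\mathcal P_0(n)|,
\]
and, by the counting carried out in the proof of Theorem \ref{conj6.2c},
\[
\sum_{\lambda\vdash n}\prod_{u\in\lambda}\frac{c_{sp}^\lambda(u)}{h^\lambda(u)}
 \;=\; \begin{cases} (-1)^{n/2}\,|sy\mathcal P_0(n)| & \text{if } n \text{ is even,} \\ 0 & \text{if } n \text{ is odd.} \end{cases}
\]

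All that remains is to show $(-1)^{\binom{n}{2}}$ agrees with these signs. When $n$ is odd, Theorem \ref{non-zero cont} forces $|sy\mathcal P_0(n)|=0$, since every nested hook $(a,1^a)$ of $\lambda\in sy\mathcal P_0$ contributes an even number $2a$ of cells, so $|\lambda|$ must be even; hence both sides of the claimed identity are $0$. When $n$ is even, write $\binom{n}{2}=(n/2)(n-1)$; since $n-1$ is odd we get $\binom{n}{2}\equiv n/2\pmod 2$, so $(-1)^{\binom{n}{2}}=(-1)^{n/2}$. Multiplying the first displayed identity by this sign then yields the second.

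No serious obstacle is anticipated here: Corollary \ref{cc=h} has already done the heavy lifting, so the argument collapses to the elementary parity identity $\binom{n}{2}\equiv n/2\pmod 2$ for even $n$. As a sanity check, for $n=2$ the only partition in $sy\mathcal P_0(2)$ is $(1,1)$, and both sides equal $-1$; for $n=4$ the only partition in $sy\mathcal P_0(4)$ is $(2,1,1)$, and both sides equal $+1$.
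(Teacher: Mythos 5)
Your proof is correct and follows essentially the same route as the paper: the paper deduces the corollary by comparing coefficients in Theorems \ref{conj6.2c} and \ref{conj6.3b}, whose proofs contain exactly the $sy\mathcal P_0$ evaluation and the $\pm1$ sign count you invoke. Your explicit parity check $\binom{n}{2}\equiv n/2 \pmod 2$ for even $n$ (and the vanishing of both sides for odd $n$) is the same observation left implicit in the paper's one-line derivation.
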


\noindent
Before proving the case $t=-1$ of Conjecture \ref{conj6.3a}, we characterize the partitions in $sy\mathcal P_{\pm 1}$.

\smallskip\noindent
Denote by $\delta_r$ the {\it staircase partition} with $r$ consecutive parts $\delta_r=(r, r-1, \ldots, 2,1)$. Notice that the length of the Durfee square of $\delta_r$ is $\lceil \frac{r}{2}\rceil$.

\begin{theorem} \label{non-pm1 cont} $\lambda\in sy\mathcal P_{\pm 1}$ if and only if $\lambda=\emptyset$ or $\lambda=\delta_r$ for some $r$. 
\end{theorem}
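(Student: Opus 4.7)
\smallskip\noindent
The plan is to mimic the induction in the proof of Theorem \ref{non-zero cont}, stratifying by the length $k$ of the Durfee square of $\lambda$ and using Remark \ref{R1} to peel off outer hooks. The easy direction (every $\delta_r$ lies in $sy\mathcal{P}_{\pm 1}$) follows from Remark \ref{remark c-h}: since $\delta_r$ is self-conjugate, $r_j(\delta_r)=0$ for every $j$, so each symplectic content of $\delta_r$ equals $1\pm h^{\delta_r}(i,j)$, and a direct calculation gives $h^{\delta_r}(i,j)=2r-2i-2j+3$, which is always odd. Hence every symplectic content of $\delta_r$ is even, and in particular not $\pm 1$.

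\smallskip\noindent
For the converse I would induct on $k$. The base cases $k=0$ (giving $\lambda=\emptyset$) and $k=1$ (where $\lambda$ is a hook and Corollary \ref{cor 1} forces $\lambda\in\{(1),(2,1)\}=\{\delta_1,\delta_2\}$) are immediate. For $k\geq 2$, let $\mu$ be obtained from $\lambda$ by stripping the outer hook. By Remark \ref{R1}, $\mu\in sy\mathcal{P}_{\pm 1}$, and since $\mu$ has Durfee square of length $k-1\geq 1$, the inductive hypothesis gives $\mu=\delta_s$ for some $s\geq 1$. The remaining task is to show that the only outer hook $(a,1^b)$ (with $a\geq s+1$ and $b\geq s$) that can be attached to $\delta_s$ while keeping the result in $sy\mathcal{P}_{\pm 1}$ is $(s+2,1^{s+1})$, producing $\lambda=\delta_{s+2}$.

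\smallskip\noindent
For this last step I would first record the column lengths of the glued partition: $\lambda'_1=b+1$, $\lambda'_j=s-j+3$ for $2\leq j\leq s+1$, and $\lambda'_j=1$ for $s+2\leq j\leq a$. Plugging into the definition of $c^\lambda_{sp}$, the symplectic contents of the cells in the outer hook work out to
\begin{align*}
c^\lambda_{sp}(1,1)&=-2b,\\
c^\lambda_{sp}(1,j)&=2j-b-s-3 \quad (2\leq j\leq s+1),\\
c^\lambda_{sp}(1,j)&=j-b-1 \quad (s+2\leq j\leq a),\\
c^\lambda_{sp}(i,1)&=a+s-2i+4 \quad (2\leq i\leq s+1),\\
c^\lambda_{sp}(i,1)&=a-i+2 \quad (s+2\leq i\leq b+1).
\end{align*}
A short case analysis then finishes. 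If $a=s+1$, the constraint $c^\lambda_{sp}(1,s+1)=s-b-1\neq\pm 1$ forces $b\geq s+1$, but then $c^\lambda_{sp}(s+2,1)=1$. If $a\geq s+3$, requiring $j-b-1\neq\pm 1$ for every $j\in[s+2,a]$ forces $b\geq a+1$, and then $c^\lambda_{sp}(a+1,1)=1$. Hence $a=s+2$, and the constraints from $c^\lambda_{sp}(1,s+2)=s+1-b\neq\pm 1$ and from $c^\lambda_{sp}(s+3,1)=1$ (should the cell exist) pin $b=s+1$, giving $\lambda=\delta_{s+2}$. The main obstacle is organizing these five cell ranges and the corresponding $\lambda'_j$ values without miscounting; once those are in place, the remaining pigeonhole argument is elementary.
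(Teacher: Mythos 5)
Your proof is correct and follows essentially the same route as the paper's: induction on the length of the Durfee square, peeling the outer hook via Remark \ref{R1} so that the inductive hypothesis forces $\mu=\delta_s$, then computing the symplectic contents of the outer-hook cells (your formulas agree with the paper's) and ruling out all hooks except $(s+2,1^{s+1})$ by a short case analysis. The only cosmetic difference is that you dispatch the easy direction up front by the parity observation that $h^{\delta_r}(i,j)$ is odd so $c^{\delta_r}_{sp}=1\pm h$ is even, which the paper records separately as Corollary \ref{c=h}.
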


\begin{proof} Clearly $\emptyset\in sy\mathcal P_{\pm1}$. If $\lambda \neq \emptyset$,  we prove the statement by induction on the  length $m$ of the Durfee square of the partition.  

\smallskip
\noindent If $m=1$, the statement is true by Corollary \ref{cor 1}.

\smallskip
\noindent
For the inductive step, assume that if $\mu$ is a partition with Durfee square of length at most $m$, then $\mu\in sy\mathcal P_{\pm1}$ if and only if $\mu$ is a staircase partition. Let $\lambda\in sy\mathcal P_{\pm 1}$ be a partition with Durfee square of size $m+1$. Then by the induction hypothesis, the partition $\lambda^-$ obtained from $\lambda$ by removing the outer hook is a staircase with Durfee size $m$, i.e., $\lambda^-=\delta_{k}$ with $k\in \{2m, 2m-1\}$.  Conversely, it follows from Remark \ref{R1} that if $\lambda^-=\delta_{k}$ with $k\in \{2m, 2m-1\}$, then  $c^\lambda_{sp}(i,j)\neq \pm 1$ for all cells $(i,j)\in \lambda$ with $i,j\geq 2$. Assume $\lambda$ is such that $\lambda^-=\delta_{k}$ with $k\in \{2m, 2m-1\}$.
We have $$c^\lambda_{sp}(1,j)=\begin{cases}2-2\lambda'_1  & \text{ if } j=1\\ -\lambda'_1-k+2j-2 & \text{ if } 2\leq j\leq k+1\\ j-\lambda'_1 & \text{ if } j\geq k+2 \text{ \ (if any) } \end{cases} $$ and 
$$c^\lambda_{sp}(j,1)=\begin{cases}2-2\lambda'_1  & \text{ if } j=1\\ \lambda_1+k-2j+4 & \text{ if } 2\leq j\leq k+1\\ \lambda_1-j+2 & \text{ if } j\geq k+2 \text{ \ (if any) } \end{cases}$$ By construction, $\lambda_1, \lambda'_1\geq k+1$.

\smallskip \noindent If $\lambda_1=\lambda'_1=k+1$, then $c^\lambda_{sp}(1,k+1)=-1$.

\smallskip \noindent  If $\lambda_1=k+1, \lambda'_1\geq k+2$, then $c^\lambda_{sp}(k+2,1)=1$. 

\smallskip \noindent  If $\lambda_1\geq k+2$ and  $\lambda'_1= k+1$,  then $c^\lambda_{sp}(1,k+2)=1$.

\smallskip \noindent  If $\lambda'_1>\lambda_1\geq k+2$,  then $c^\lambda_{sp}(\lambda_1+1,1)=1$.

\smallskip \noindent  If $\lambda_1\geq\lambda'_1\geq k+3$,  then $c^\lambda_{sp}(1,\lambda'_1-1)=-1$.

\smallskip \noindent Finally, if $\lambda_1=\lambda'_1=k+2$, we have $c^\lambda_{sp}(i,j)$ is even for all cells $(i,j)$. \end{proof}

\noindent From Theorem \ref{non-pm1 cont} and Remark \ref{remark c-h} we have the following. 

\begin{corollary} \label{c=h} For $\lambda\neq \emptyset$,  we have $\lambda \in sy\mathcal P_{\pm1}$ if and only if  $$c^\lambda_{sp}(i,j)=\begin{cases} h^\lambda(i,j)+1 & \text{ if } i>j \\ -h^\lambda(i,j)+1 & \text{ if } i\leq j,\end{cases}  $$ for all $(i,j)\in \lambda$.
\end{corollary}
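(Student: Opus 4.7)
My plan is to mirror the proof of Corollary \ref{cc=h}, with Theorem \ref{non-pm1 cont} replacing Theorem \ref{non-zero cont} and Remark \ref{remark c-h} serving as the computational bridge. The intermediate condition is that $r_j(\lambda)=0$ for every $j$ at most the length of the Durfee square of $\lambda$, equivalently that each nested hook $(a,1^b)$ of $\lambda$ satisfies $a=b+1$.

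For the forward direction, assume $\lambda\in sy\mathcal P_{\pm1}$ is nonempty. Theorem \ref{non-pm1 cont} identifies $\lambda$ with a staircase $\delta_r$, which is self-conjugate, so $r_j(\lambda)=\lambda_j-\lambda'_j=0$ for every $j$ in the Durfee square. Substituting $r_j=0$ (respectively $r_i=0$) into the two cases of Remark \ref{remark c-h} immediately produces $c^\lambda_{sp}(i,j)=h^\lambda(i,j)+1$ when $i>j$ and $c^\lambda_{sp}(i,j)=-h^\lambda(i,j)+1$ when $i\leq j$, as claimed. For the converse, I would assume the formula holds at every cell of $\lambda$ and compare term by term with Remark \ref{remark c-h}; this reads off $r_{\min(i,j)}(\lambda)=0$ at every cell of $\lambda$, hence $r_j(\lambda)=0$ for all $j$ up to the Durfee length, so every nested hook of $\lambda$ has the form $(b+1,1^b)$. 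Appealing once more to Theorem \ref{non-pm1 cont} then upgrades this to $\lambda=\delta_r\in sy\mathcal P_{\pm1}$.

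The main obstacle is the converse direction: the bare rank condition $r_j=0$ only expresses self-conjugacy, which is a priori strictly weaker than being a staircase. Carrying out the converse therefore leans decisively on the classification in Theorem \ref{non-pm1 cont}, since the forbidden value $-h^\lambda(i,j)+1=-1$ would be realized at any hook of length $2$, and Theorem \ref{non-pm1 cont} is precisely what forbids such hooks outside the staircase class. A cleaner alternative I would try is to bypass the bare rank argument entirely and use Theorem \ref{non-pm1 cont} to read both directions directly: $\lambda\in sy\mathcal P_{\pm1}$ iff $\lambda$ is a staircase iff $r_j(\lambda)=0$ throughout the Durfee iff the displayed formula holds.
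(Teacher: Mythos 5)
Your forward direction is correct and follows the paper's intended route: Theorem \ref{non-pm1 cont} gives $\lambda=\delta_r$, a staircase is self-conjugate, so $r_j(\lambda)=0$ for every $j$ up to the Durfee length, and Remark \ref{remark c-h} then yields the displayed formula.

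The difficulty you flag in the converse is real, and neither of your proposed repairs closes it; in fact the ``if'' direction of the corollary is false as stated, and the paper's own one-sentence proof rests on the same false equivalence (it asserts that $\lambda$ is a staircase iff $r_j(\lambda)=0$ up to the Durfee length, whereas that condition characterizes the strictly larger class of self-conjugate partitions). Your appeal to Theorem \ref{non-pm1 cont} to ``upgrade'' self-conjugacy to staircase-ness is not available: that theorem says $\lambda\in sy\mathcal P_{\pm1}$ iff $\lambda$ is a staircase, which is the conclusion you are after, not a tool for ruling out self-conjugate non-staircases; and your ``cleaner alternative'' simply reinstates the false link. Concretely, take $\lambda=(2,2)$: it is self-conjugate, so $r_1(\lambda)=r_2(\lambda)=0$ and by Remark \ref{remark c-h} the displayed formula holds at all four cells, e.g.\ $c^\lambda_{sp}(1,2)=-h^\lambda(1,2)+1=-2+1=-1$; but that value $-1$ shows $(2,2)\notin sy\mathcal P_{\pm1}$. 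The same failure occurs for every self-conjugate partition that is not a staircase, because any such partition has a cell $(i,j)$ with $i\leq j$ and $h^\lambda(i,j)=2$ (staircases being exactly the $2$-cores, and hook lengths of a self-conjugate partition being symmetric in $(i,j)\mapsto(j,i)$). Only the ``only if'' half can be proved --- which is all that is needed downstream --- unless the right-hand side is strengthened, e.g.\ by additionally requiring $h^\lambda(i,j)\neq 2$ whenever $i\leq j$. This is exactly where the template of Corollary \ref{cc=h} breaks: there, Theorem \ref{non-zero cont} characterizes $sy\mathcal P_0$ by the nested-hook condition $a=b$, which is equivalent to $r_j=-1$, so both directions go through; here the rank condition $r_j=0$ is strictly weaker than the characterization in Theorem \ref{non-pm1 cont}.
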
\begin{proof} The statement follows from the fact that  $\lambda$ is a staircase partition if and only if $r_j(\lambda)=0$ for all $j$ no larger than the length if the Durfee square of $\lambda$.  \end{proof}

\begin{theorem}\label{t=-1} We have the generating function
$$\sum_{n\geq0}q^n\sum_{\lambda\vdash n}\prod_{u\in\lambda}\frac{c_{sp}^{\lambda}(u))^2-1}{(h^{\lambda}(u))^2}=\prod_{j\geq1}\frac{1-q^j}{1-q^{4j-2}}=\sum_{n\geq0}(-q)^{\binom{n+1}2}.$$
\end{theorem}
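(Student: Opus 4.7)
The proof follows the scheme of Theorem \ref{conj6.3b}: since $(c_{sp}^{\lambda}(u))^2-1$ vanishes whenever $c_{sp}^{\lambda}(u)=\pm 1$, only partitions $\lambda\in sy\mathcal P_{\pm 1}$ contribute to the left-hand side. By Theorem \ref{non-pm1 cont}, these are the empty partition together with the staircases $\delta_r$, $r\geq 1$, of size $\binom{r+1}{2}$. Writing $P_r:=\prod_{u\in\delta_r}\bigl((c_{sp}^{\delta_r}(u))^2-1\bigr)\big/(h^{\delta_r}(u))^2$, the left-hand side collapses to $\sum_{r\geq 0}q^{\binom{r+1}{2}}P_r$ (with $P_0=1$ for the empty partition), so it suffices to prove $P_r=(-1)^{\binom{r+1}{2}}$.

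By Corollary \ref{c=h}, for $\lambda=\delta_r$ we have $c_{sp}^{\lambda}(u)=h^{\lambda}(u)+1$ below the diagonal and $c_{sp}^{\lambda}(u)=-h^{\lambda}(u)+1$ on and above the diagonal. Hence $(c_{sp}^{\lambda}(u))^2-1=h(u)(h(u)+2)$ when $i>j$ and $h(u)(h(u)-2)$ when $i\leq j$. Since $\delta_r=\delta_r'$, cells $(i,j)$ with $i<j$ pair off with their mirrors $(j,i)$ having equal hook length, and one obtains
$$P_r=\prod_{(i,j):\,i=j}\frac{h-2}{h}\cdot\prod_{(i,j):\,i<j}\frac{h^2-4}{h^2}.$$

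The heart of the argument is to show $P_r=(-1)^{\lceil r/2\rceil}$, which equals $(-1)^{\binom{r+1}{2}}$ by a quick parity check on $r\bmod 4$. Using $h^{\delta_r}(i,j)=2(r+1-i-j)+1$ and indexing cells by their hook value $2m+1$, for each $m\in\{0,\ldots,r-1\}$ there are $\alpha_m:=\lfloor(r-m)/2\rfloor$ above-diagonal pairs and $\epsilon_m:=[r-m\text{ odd}]$ diagonal cells carrying that hook. Substituting $(2m+1)^2-4=(2m-1)(2m+3)$ together with $2\alpha_m+\epsilon_m=r-m$ gives
$$P_r=\prod_{m=0}^{r-1}\frac{(2m-1)^{\alpha_m+\epsilon_m}\,(2m+3)^{\alpha_m}}{(2m+1)^{r-m}}.$$
Reindexing $(2m-1)=(2(m-1)+1)$ and $(2m+3)=(2(m+1)+1)$ collects, for each odd value $(2m'+1)$, the exponent $\alpha_{m'+1}+\epsilon_{m'+1}+\alpha_{m'-1}-(r-m')$. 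The identity $\alpha_{m'-1}+\alpha_{m'+1}+\epsilon_{m'+1}=r-m'$ (readily verified in the two parities of $r-m'$) forces this exponent to vanish for $m'\in\{1,\ldots,r-1\}$; the $m'=0$ case is trivial because $2\cdot 0+1=1$, and $m'=r$ contributes trivially since $\alpha_{r-1}=0$. Only the shifted copy of $(2m-1)$ at $m=0$ survives, contributing the value $-1$ to the exponent $\alpha_0+\epsilon_0=\lceil r/2\rceil$, whence $P_r=(-1)^{\lceil r/2\rceil}$.

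Summing over $r\geq 0$ yields the middle expression $\sum_{n\geq 0}(-q)^{\binom{n+1}{2}}$. For the product form, we invoke Gauss's identity $\sum_{n\geq 0}q^{\binom{n+1}{2}}=\prod_{n\geq 1}(1-q^{2n})/(1-q^{2n-1})$ with $q$ replaced by $-q$; multiplying numerator and denominator of each factor by $1-q^{2n-1}$ and using $(1-q^{2n-1})(1+q^{2n-1})=1-q^{4n-2}$ converts the result into $\prod_{j\geq 1}(1-q^j)/(1-q^{4j-2})$. The main obstacle is the interior telescoping in the third paragraph: the multiplicities $\alpha_m$ and $\epsilon_m$ depend on the parity of $r-m$ and must be tracked carefully through the two index shifts to witness the near-total cancellation down to the single surviving boundary sign.
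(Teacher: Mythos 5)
Your proposal is correct and follows essentially the same route as the paper: restrict to staircases via Theorem \ref{non-pm1 cont}, group the cells of $\delta_r$ by anti-diagonal (equivalently by hook length), factor the difference of squares, and telescope to obtain the single surviving sign $(-1)^{\lceil r/2\rceil}=(-1)^{\binom{r+1}{2}}$, finishing with Gauss's identity at $-q$. The only cosmetic difference is that you organize the count via Corollary \ref{c=h} and mirror-cell pairing, whereas the paper tabulates the two content values $-2(k-j)$ and $2(k-j+1)$ on each anti-diagonal directly; the telescoping identities are equivalent.
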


\begin{proof}

\smallskip 
\noindent Suppose $\lambda=\delta_k\vdash n$. For each each $1\leq j\leq k$, we refer to the cells $(a,c)\in \delta_k$ with $a+c=j+1$ as the $j^{th}$ anti-diagonal of $\lambda$. 
Then, in the $j^{th}$ anti-diagonal we have $j$ cells with hook-length $2(k-j)+1$. Of these, $\lceil \frac{j}{2}\rceil$ cells have symplectic content  $-2(k-j)$ and $\lfloor \frac{j}{2}\rfloor$ cells have symplectic content $2(k-j+1)$. Therefore, if $n$ is not a triangular number, $\sum_{\lambda\vdash n} \prod_{u\in\lambda}\frac{c_{sp}^{\lambda}(u))^2-1}{(h^{\lambda}(u))^2}=0$ and if $n$ is the triangular number $\binom{k+1}{2}$, then 

\begin{align*}
\sum_{\lambda\vdash n}&\prod_{u\in\lambda}\frac{c_{sp}^{\lambda}(u))^2-1}{(h^{\lambda}(u))^2} \\
= &\prod_{j=1}^k\frac{(4(k-j)^2-1)^{\lceil\frac{j}{2}\rceil}(4(k-j+1)^2-1)^{\lfloor\frac{j}{2}\rfloor}}{(2(k-j)+1)^{2j}}\\ 
 =& \prod_{j=1}^k\frac{(2(k-j)-1)^{\lceil\frac{j}{2}\rceil}(2(k-j+1)+1)^{\lfloor\frac{j}{2}\rfloor}}{(2(k-j)+1)^{j}}\\ 
 =& \frac{ (-1)^{\lceil\frac{k}{2}\rceil}\prod_{j=2}^k(2(k-j+1)-1)^{\lceil\frac{j-1}{2}\rceil} \cdot(2k-1)
\prod_{j=2}^{k-1}(2(k-j)+1)^{\lfloor\frac{j+1}{2}\rfloor}}{\prod_{j=1}^k(2(k-j)+1)^{j}}.
\end{align*}
Since $\lceil\frac{j-1}{2}\rceil+\lfloor \frac{j+1}{2}\rfloor=j$ for all $2\leq j\leq k-1$, we obtian
$$\sum_{\lambda\vdash n} \prod_{u\in\lambda}\frac{c_{sp}^{\lambda}(u))^2-1}{(h^{\lambda}(u))^2} =(-1)^{\lceil \frac{k}{2}\rceil}=(-1)^{\binom{k+1}{2}}=(-1)^n.$$
Lastly, $$ \prod_{j\geq1}\frac{1-q^j}{1-q^{4j-2}}=\sum_{n\geq0}(-q)^{\binom{n+1}2}$$ follows from Gauss' identity  \cite[(2.2.13)]{A98}, with $q$ replaced by $-q$, and Euler's identity  \cite[(1.2.5)]{A98}, with $q$ replaced by $q^2$. 
\end{proof}

\begin{remark}
Let $\beta(n)$ be the number of {\it overcubic partitions} of an integer $n$, see \cite{K12} and references therein. The generating function for $\beta(n)$ is  
$$\sum_{n\geq0}\beta(n)\,q^n=\prod_{j\geq1}\frac1{(1-q^{4j-2})(1-q^j)^2}=\prod_{j\geq1}\frac{1+q^{2j}}{(1-q^j)^2}.$$ 
Thus, the case $t=2$ of Conjecture \ref{conj6.3a}, which is still an open problem, becomes
$$\beta(n)=\sum_{\lambda\vdash n}\prod_{u\in\lambda}\frac{2+(c_{sp}^{\lambda}(u))^2}{(h^{\lambda}(u))^2}.$$\end{remark}

\smallskip
\noindent
\section {Beck-Type identities}\label{beck-section}

\smallskip
\noindent 
In this section, we consider again the identity  \eqref{dist-parts},
$$p(n\mid \mbox{distinct even parts})=\vert sy\mathcal P_0(n)\vert,$$
and establish two Beck-type companion identities for \eqref{dist-parts}. If $\mathcal P(n \mid X)$ denotes the set of partitions of $n$ satisfying condition $X$ and $p(n\mid  X)=|\mathcal P(n\mid X)|$, a Beck-type companion identity to $p(n\big| X)= p(n\mid Y)$ is an identity that equates the excess of the number of parts of all partitions in $\mathcal P(n\mid X)$ over the number of parts of all partitions in $\mathcal P(n\mid Y)$ and (a multiple of) the number of partitions of $n$ satisfying a condition that is a slight relaxation of $X$ (or $Y$). 

\smallskip\noindent
Recall that $sy\mathcal P_0(n)$ is the set of partitions of $n$ for which the symplectic content of all cells is non-zero. From the proof of Theorem  \ref{non-zero cont}, these partitions are precisely those which are {\it almost self-conjugate}, i.e.. the nested hooks have $\text{leg $=$ arm $+1$}$. Moreover, if the Durfee square has side length $m$, then the $(m+1)^{st}$ part of the partition is $m$ and removing a box from each of the first $m$ columns of the Young diagram leaves a self-conjugate partition.

\smallskip\noindent 
In \cite{AB19}, the authors give a Beck-type companion identity for
$$p(n\mid \mbox{distinct odd parts})=p(n\mid \mbox{self-conjugate}),$$ 
and the work of this section has many similarities with \cite{AB19}.

\smallskip
\noindent
Denote by $s_{c'}(n)$ the number of parts of all partitions in $sy\mathcal P_0(n)$  and by $s_e(n)$ the number of parts of all partitions of $n$ into distinct even parts. Before we introduce our first Beck-type identity, we need a definition. Recall that  the rank of a partition $\lambda$ is the difference, $\lambda_1-\ell(\lambda)$, between the largest part in $\lambda$ and the length of $\lambda$. In \cite{BG}, the $M_2$-rank of a partition $\lambda$ is defined as the difference between the largest part and the number of parts  in  the $\text{mod } 2$ diagram of $\lambda$, that is, $$M_2(\lambda)=\left\lceil\frac{\lambda_1}{2}\right\rceil-\ell(\lambda).$$

\begin{theorem}\label{first beck} For  all $n > 0$,  we have $s_{c'}(n)-s_e(n)$ equals twice the number of partitions  into even parts with exactly one even part repeated plus the number of partitions into distinct even parts with even $M_2$-rank. 
\end{theorem}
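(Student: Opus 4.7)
\smallskip
\noindent
The plan has three steps. First, I would leverage the straightening bijection from Section~\ref{no-thms}: a partition $\lambda\in sy\mathcal{P}_0(n)$ with nested hooks $(a_1,1^{a_1}),\dots,(a_m,1^{a_m})$ (where $a_1>\dots>a_m\ge 1$) corresponds to $\mu=(2a_1,\dots,2a_m)\in\mathcal{P}(n\mid\text{distinct even parts})$. Since the outer hook of $\lambda$ is exactly $(a_1,1^{a_1})$ and all remaining nested hooks lie strictly inside it, $\ell(\lambda)=\lambda_1'=a_1+1=\mu_1/2+1$. Summing over $\lambda\in sy\mathcal{P}_0(n)$,
\[
s_{c'}(n)-s_e(n)=\sum_\mu\bigl(\mu_1/2+1-\ell(\mu)\bigr)=\sum_\mu\bigl(M_2(\mu)+1\bigr),
\]
the sum running over $\mu\in\mathcal{P}(n\mid\text{distinct even parts})$.

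\smallskip
\noindent
Second, I would split this sum by the parity of $M_2(\mu)$. The ``$+1$'' from partitions with $M_2(\mu)$ even contributes exactly $B(n)$; the elementary identity $r+[r\text{ odd}]=2\lceil r/2\rceil$ rewrites the remainder as $2\sum_\mu\lceil M_2(\mu)/2\rceil$. Hence the theorem is equivalent to
\[
\sum_{\mu\in\mathcal{P}(n\mid\text{distinct even parts})} \lceil M_2(\mu)/2\rceil = A(n).
\]
Halving all parts gives a bijection between $\mathcal{P}(n\mid\text{distinct even parts})$ and $\mathcal{P}(N\mid\text{distinct parts})$, where $N=n/2$, under which $M_2(\mu)=r(\nu)$; the same halving identifies $A(n)$ with $A'(N):=p(N\mid\text{exactly one part repeated})$. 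So the previous display becomes
\[
\sum_{\nu\in\mathcal{P}(N\mid\text{distinct parts})}\lceil r(\nu)/2\rceil=A'(N),
\]
a Beck-type identity for distinct partitions in the spirit of \cite{AB19}.

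\smallskip
\noindent
Finally (the crux), I would prove this last identity via generating functions. A short computation, by selecting the value $k$ that is repeated, gives
\[
\sum_{N\ge 0}A'(N)q^N = \sum_{k\ge 1}\frac{q^{2k}}{1-q^k}\prod_{j\ne k}(1+q^j) = \prod_{j\ge 1}(1+q^j)\sum_{k\ge 1}\frac{q^{2k}}{1-q^{2k}}.
\]
Writing $\lceil r/2\rceil=\tfrac12(r+[r\text{ odd}])$, the left-hand-side generating function is $\tfrac12(R(q)+C(q))$, where $R(q)=\sum_\nu r(\nu)q^{|\nu|}$ and $C(q)=\sum_\nu [r(\nu)\text{ odd}]\,q^{|\nu|}$, both summed over distinct $\nu$. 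Thus the required identity reduces to
\[
2R(q)+D(q)-F(q)=4\,D(q)\sum_{k\ge 1}\frac{q^{2k}}{1-q^{2k}},
\]
where $D(q)=\prod_{j\ge 1}(1+q^j)$ and $F(q)=\sum_\nu(-1)^{r(\nu)}q^{|\nu|}$. The main obstacle is this final $q$-series identity, which I would attack by manipulations analogous to those in \cite{AB19}, using the standard expansions $R(q)=\sum_{b\ge 0}(D(q)-D_b(q))-D(q)\sum_{j\ge 1} q^j/(1+q^j)$ with $D_b(q)=\prod_{j=1}^b(1+q^j)$. An alternative route is an explicit Glaisher-style bijection pairing $(\nu,o)$ with $\nu$ distinct and $o$ an odd integer in $[1,r(\nu)]$ against triples $(\rho,k,m)$ with $\rho$ distinct, $k,m\ge 1$, and $|\rho|+2km=N$, with coincidences resolved by a Glaisher-type merging procedure.
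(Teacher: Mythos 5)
Your reduction in the first two steps is correct and follows a genuinely different route from the paper's. Via the straightening bijection you convert the part-count excess combinatorially into $\sum_\mu \bigl(M_2(\mu)+1\bigr)$ over partitions $\mu$ of $n$ into distinct even parts (the key fact $\ell(\lambda)=\mu_1/2+1$ is right, since the outer hook of a partition in $sy\mathcal P_0$ has leg equal to arm), and the parity split plus halving of parts reduces the theorem to the clean statement $\sum_{\nu}\lceil r(\nu)/2\rceil=p(N\mid\text{exactly one part repeated})$ over distinct partitions $\nu$ of $N=n/2$. The paper instead works analytically from the outset: it forms the bivariate generating function $F(q;z)=\sum_{m\ge1} z^{m+1}q^{m^2+m}/(zq^2;q^2)_m$, applies a Heine-type transformation and the operator $D$, and then interprets the resulting pieces ($\sigma(q^2)$ and $(-q^2;q^2)_\infty\sum_m q^{2m}/(1-q^{2m})$) combinatorially. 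Your version isolates all the analytic content into a single rank identity for distinct partitions, which is arguably more transparent.

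The gap is in your third step: the identity $2R(q)+D(q)-F(q)=4D(q)\sum_{k\ge1}q^{2k}/(1-q^{2k})$ is asserted, with only a pointer to ``manipulations analogous to \cite{AB19}'' or an unspecified Glaisher-style bijection. This is not a routine manipulation: $F(q)$ is exactly the Andrews--Dyson--Hickerson series $\sigma(q)=\sum_{n\ge0}q^{\binom{n+1}{2}}/(-q;q)_n$ attached to an indefinite quadratic form, and its appearance cannot be produced by elementary rearrangement of your expansion of $R(q)$. The missing ingredient is Theorem 1 of \cite{AJO01} (with $a=0$ and $t=-q$), which evaluates the very sum you introduce, namely $\sum_{b\ge0}\bigl(D(q)-D_b(q)\bigr)=\tfrac12\sigma(q)+D(q)\bigl(-\tfrac12+\sum_{m\ge1}q^m/(1-q^m)\bigr)$. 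Substituting this into $R(q)=\sum_{b\ge0}(D(q)-D_b(q))-D(q)\sum_{j\ge1}q^j/(1+q^j)$ and using $\tfrac{q^m}{1-q^m}-\tfrac{q^m}{1+q^m}=\tfrac{2q^{2m}}{1-q^{2m}}$ gives $R(q)=\tfrac12\sigma(q)-\tfrac12D(q)+2D(q)\sum_{m\ge1}q^{2m}/(1-q^{2m})$, from which your target identity follows at once. So your argument does close, but only after explicitly invoking (or reproving) the \cite{AJO01} identity --- the same external input the paper uses, applied there with $q$ replaced by $q^2$; as written, the step you yourself label ``the main obstacle'' is left unproved.
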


\begin{proof}
We use the notation $D f(z,q)$ to mean the derivative of $f(z,q)$ with respect to $z$ evaluated at $z=1$, i.e., 
$$Df(z,q):=\left.\left(\frac{\partial}{\partial z}f(z,q)\right)\right|_{z=1}.$$
Note that if $f(z,q)$ is a partition generating function wherein the exponent of $q$ keeps track of the number being partitioned and the exponent of $z$  is  the number of parts, then $Df(z,q)$ is the generating function for the number of parts in the partitions considered.

\noindent
We denote the generating functions for $s_{c'}(n)$  by $S_{c'}(q)$.  Thus,  
$$S_{c'}(q)=\sum_{n\geq 0} s_{c'}(n)q^n .$$

\noindent
To obtain the generating function for the number of partitions in $sy\mathcal P_0$ wherein the exponent of $z$ keeping track of the number of parts, we use the bivariate generating function for self-conjugate partitions \cite{AB19} given by 
$$ \sum_{m\geq 0}\frac{z^{m}q^{m^2}}{(zq^2;q^2)_m}.$$
Given a partition in $sy\mathcal P_0$ with Durfee square of side length $m$, we remove one box from each of the first $m$ columns of the Young diagram to  obtain a self-conjugate partition. Thus, the bivariate generating function for partitions in $sy\mathcal P_0$ wherein the exponent of $z$ keeping track of the number of parts is
 \begin{equation}\label{bivp_0}F(q;z):=\sum_{m\geq 1}\frac{z^{m+1}q^{m^2+m}}{(zq^2;q^2)_m}.\end{equation}

\smallskip
\noindent 
We begin by writing $F(q;z)$ as a limit. We mean  
$$F(q;z)=\lim_{\tau\to 0}z\sum_{m\geq 1}\frac{(-\frac{q^2}{\tau};q^2)_m(q^2;q^2)_m z^m\tau^m}{(q^2;q^2)_m(zq^2;q^2)_m}.$$ 
Next, we apply the transformation on the last line of pg. 38 of \cite{A98} in which we first replace  $q$  by $q^2$ and then substitute $a= -\frac{q^2}{\tau}, b=q^2, c=zq^2, t=z\tau$. We follow this through the limit as $\tau \to 0$. We obtain 

\begin{equation}F(q;z)=-z+ z(1-z)\sum_{m\geq 0}(-q^2; q^2)_m z^m.
\end{equation} 

\noindent 
Applying the operator $D$, we obtain 
\begin{align*} S_{c'}(q)=&D F(q;z) \\
=&-1+ \lim_{z\rightarrow 1^{-}}(1-z)\sum_{m\geq0}(-q^2;q^2)_mz^m+D(1-z)\sum_{m\geq 0}(-q^2; q^2)_m z^m \\
=&-1+\sum_{m\geq0}\frac{q^{m^2+m}}{(q^2;q^2)_m}+D(1-z)\sum_{m\geq 0}(-q^2; q^2)_m z^m \\
=&-1+(-q^2;q^2)_{\infty}++D(1-z)\sum_{m\geq 0}(-q^2; q^2)_m z^m.
\end{align*}

\noindent 
To find  $D(1-z)\sum_{m\geq 0}(-q^2; q^2)_m z^m$ we use \cite[Proposition 2.1 and Theorem 1]{AJO01}. When using Theorem 1 in \cite{AJO01}, we first replace $q$ by $q^2$ and then set $a=0$ and $t=-q^2$. Thus, 
\begin{align*} D(1-z)\sum_{m\geq 0}(-q^2; q^2)_m z^m &=\sum_{n\geq0}((-q^2;q^2)_{\infty}-(-q^2;q^2)_n) \\
&=\frac12\sum_{n\geq1}\frac{q^{n(n-1)}}{(-q^2;q^2)_{n-1}}+(-q^2;q^2)_{\infty}\left(-\frac12+\sum_{m\geq1}\frac{q^{2m}}{1-q^{2m}}\right).
\end{align*}
Therefore, the generating function for the number of parts of all partitions in $sy\mathcal P_0(n)$ can be written as 

\newpage

\begin{align*} S_{c'}(q)&=D F(q;z) \\ 
&= -1+\frac{1}{2} \sum_{n=1}^\infty \frac{q^{n(n-1)}}{(-q^2;q^2)_{n-1}}+(-q^2;q^2)_\infty \left(\frac12+\sum_{m\geq 1}\frac{q^{2m}}{1-q^{2m}}\right).
\end{align*}
We notice that
$$\sum_{n\geq1}\frac{q^{n(n-1)}}{(-q^2;q^2)_{n-1}}=\sigma(q^2),$$
where $\sigma(q^2)$ is the expression (1.1) in \cite{ADH88}. There, it is shown that $\sigma(q)=\sum_{n=0}^\infty S(n)q^n$, where $S(n)$ counts the number of partitions of $n$ into distinct parts with even rank minus  the number of partitions of $n$ into distinct parts with odd rank. Thus, $\sigma(q^2)=\sum_{n\geq0}\widetilde{S}(n)q^n$, where $\widetilde{S}(n)$ is the number of partitions of $n$ into distinct even parts with even $M_2$-rank minus the number of partitions of $n$ into distinct even parts with odd $M_2$-rank. Then, \begin{equation}\label{m2-rank id} \frac{1}{2} \left(\sum_{n=1}^\infty \frac{q^{n(n-1)}}{(-q^2;q^2)_{n-1}}+(-q^2;q^2)_\infty\right)=p(n\mid \text{distinct even parts, even $M_2$-rank}).
\end{equation}

\smallskip
\noindent 
Next, we observe that $(-q^2;q^2)_\infty\sum_{m\geq 1}\frac{q^{2m}}{1-q^{2m}}$ is the generating function for the cardinality of 
$$\mathcal E(n)= \{(\lambda, (2m)^k)\mid \lambda \text{ has distinct even parts, }m,k\geq 1,|\lambda|+2mk=n \}.$$ Clearly, the subset of $\mathcal E(n)$ consisting of pairs $(\lambda, (2m))$, where $2m$ is not a part of  $\lambda$, is in one-to-one correspondence with the set of parts of all partitions of $n$ into distinct even parts. To see this, notice that if $\mu$ is a partition into distinct even parts, for each part $2t$ in $\mu$, we can create a pair $(\mu\setminus (2t), (2t))\in \mathcal E(n)$. Here and throughout, if $\eta$ is a partition whose parts (with multiplicity) are also parts of $\mu$, then $\mu\setminus \eta$  stands for the partition obtained from $\mu$ by removing all parts of $\eta$ (with multiplicity).

\smallskip
\noindent 
Now, consider the remaining pairs of partitions $(\lambda, (2m)^k)$ in $\mathcal E(n)$, i.e, pairs with $k\geq 2$ or pairs with $k=1$ and $2m$ is a part of $\lambda$. For each such pair  $(\lambda, (2m)^k)$, we create the partition $\mu=\lambda \cup (2m)^k$. Then, $\mu$ is a partition into even parts with exactly one even part repeated. Each such partition is obtained twice: if $2t$ is the repeated part of $\mu$ and it appears with multiplicity $b\geq 2$, then $\mu$ is obtained from $(\mu\setminus (2t)^b, (2t)^b)$ and also from $(\mu\setminus (2t)^{b-1}, (2t)^{b-1})$. 

\smallskip
\noindent This completes the proof of the theorem. 
\end{proof}

\smallskip
\noindent  
We proceed to derive a weighted Beck-type companion identity for  \eqref{dist-parts}. 

\begin{theorem}\label{second beck} For $n > 0$, we have $s_{c'}(n)-2s_e(n)$ equals the number of partitions partitions into even parts with exactly one even part repeated and if the repeated part is not the smallest, the partition is counted with weight $2$. 
\end{theorem}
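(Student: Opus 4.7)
The plan is to reduce Theorem~\ref{second beck} to a companion identity and then prove that identity via generating functions. Theorem~\ref{first beck} already provides $s_{c'}(n)-s_e(n)=R_e(n)+2T(n)$, where $T(n)$ denotes the number of partitions of $n$ into even parts with exactly one part repeated. Subtracting $s_e(n)$ from both sides gives
\[
s_{c'}(n)-2s_e(n)=R_e(n)+2T(n)-s_e(n),
\]
while the target of Theorem~\ref{second beck} is $T^*(n)+2T^{**}(n)=2T(n)-T^*(n)$, since $T(n)=T^*(n)+T^{**}(n)$. Equating these two expressions, Theorem~\ref{second beck} becomes equivalent to the companion identity
\[
s_e(n)=R_e(n)+T^*(n), \qquad n\geq 1. \qquad (*)
\]
So the main task is to establish $(*)$, after which Theorem~\ref{second beck} follows from Theorem~\ref{first beck} by a one-line subtraction.

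To prove $(*)$ I would work with generating functions, in the same spirit as the proof of Theorem~\ref{first beck}. The three relevant functions are
\[
S_e(q)=(-q^2;q^2)_\infty\sum_{m\geq 1}\frac{q^{2m}}{1+q^{2m}}, \qquad
T^*(q)=\sum_{m\geq 1}\frac{q^{4m}}{1-q^{2m}}\,(-q^{2m+2};q^2)_\infty,
\]
together with $R_e(q)=\tfrac12[\sigma(q^2)+(-q^2;q^2)_\infty]$ derived in Theorem~\ref{first beck}. The content of $(*)$ is the series identity $S_e(q)-T^*(q)=R_e(q)-1$, where the $-1$ accounts for the $q^0$-discrepancy ($R_e(0)=1$ while $s_e(0)=T^*(0)=0$). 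Multiplying through by $2$ and using the factorizations $(1-q^{2m})(-q^2;q^2)_m=(1-q^{4m})(-q^2;q^2)_{m-1}$ and $1/(1+q^{2m})=(-q^2;q^2)_{m-1}/(-q^2;q^2)_m$ to bring the two sums onto a common footing, this reduces to a $q$-series identity for $\sigma(q^2)$. I would derive that identity by the same sequence of maneuvers used in Theorem~\ref{first beck}: rewrite the resulting basic hypergeometric sum as a limit via the transformation on \cite[p.\,38]{A98}, then invoke the evaluation in \cite[Prop.\,2.1, Thm.\,1]{AJO01} to extract the $\sigma(q^2)$-piece.

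Alternatively, $(*)$ can be attacked combinatorially. Writing $s_e(n)=p(n\mid\text{distinct even parts})+s_e'(n)$ and $p(n\mid\text{distinct even parts})=R_e(n)+O_e(n)$, the identity becomes $T^*(n)=s_e'(n)+O_e(n)$, where $O_e(n)$ counts partitions of $n$ into distinct even parts with odd $M_2$-rank and $s_e'(n)$ counts the non-smallest parts occurring in such partitions. One would then seek a bijection from $T^*(n)$ to the disjoint union $\{(\mu,2t):\mu\in\mathcal P(n\mid\text{distinct even parts}),\,2t\in\mu,\,2t\neq\min(\mu)\}\sqcup O_e(n)$ via a merging/splitting rule on the repeated smallest block of a $T^*$-partition; small-case checks ($n=4,6,8,10$) confirm the count and suggest that pure repetitions $(s^k)$ map to $O_e(n)$ while mixed $T^*$-partitions with $j\geq 1$ distinct upper parts map to marked pairs. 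The main obstacle, by either route, is the intervention of the Andrews--Dyson--Hickerson function $\sigma$: the $q$-series approach requires unearthing a nontrivial identity for $\sigma(q^2)$, while the combinatorial approach must juggle the repeated-smallest structure of $T^*$ with the parity of the $M_2$-rank.
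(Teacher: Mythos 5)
Your reduction is arithmetically correct: writing $T(n)=T^*(n)+T^{**}(n)$ and using Theorem~\ref{first beck}, the claim of Theorem~\ref{second beck} is indeed equivalent to the companion identity $s_e(n)=R_e(n)+T^*(n)$, and your small-case checks (and the fact that this identity follows a posteriori from the paper's two theorems) show that $(*)$ is true. But the proposal does not actually prove $(*)$ --- it only verifies $n\le 10$ and then gestures at two strategies. The $q$-series route asserts that ``the same sequence of maneuvers'' as in Theorem~\ref{first beck} will work, but the identity you now need, namely
\[
2\Bigl[(-q^2;q^2)_\infty\sum_{m\geq 1}\tfrac{q^{2m}}{1+q^{2m}}-\sum_{m\geq 1}\tfrac{q^{4m}}{1-q^{2m}}(-q^{2m+2};q^2)_\infty\Bigr]=\sigma(q^2)+(-q^2;q^2)_\infty-2,
\]
is of a different shape from the one handled there (the sums are not of the form $D(1-z)\sum(-q^2;q^2)_mz^m$ to which \cite[Thm.~1]{AJO01} was applied), and no derivation is supplied; you yourself flag the $\sigma(q^2)$ identity as ``the main obstacle.'' The combinatorial route is likewise only a conjectured bijection. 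As it stands, the entire content of the theorem has been transferred into an unproven lemma.

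It is worth noting that the paper takes a route that sidesteps $\sigma$ and the $M_2$-rank altogether: it computes $S_{c'}(q)-S_e(q)$ directly from the bivariate generating functions \eqref{bivp_0} and \eqref{biv_even}, applies Heine's transformation, and lands on three explicit $q$-series. One of them, $\sum_{m\geq1}q^{m^2+m}/(q^2;q^2)_m$, together with part of the remaining difference, reassembles into the generating function for $s_e(n)$ itself (smallest parts plus non-smallest parts), and what is left over is exactly the weighted count of partitions into even parts with one repeated part. That direct computation is what makes the second subtraction of $s_e(n)$ come out for free; if you want to salvage your approach, you should either carry out the $q$-series manipulation proving $(*)$ in full, or observe that $(*)$ is precisely the difference of the two statements and therefore cannot be assumed without independent proof.
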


\begin{proof} In addition to the notation introduced in the proof of Theorem \ref{first beck}, we denote by $S_e(q)$
 the generating function for  $s_e(n)$.  Thus,  
$$S_e(q)=\sum_{n\geq 0} s_e(n)q^n.$$ 
The generating function for the number of partitions into distinct even parts, where the exponent of $z$ keeping track of the number of parts, is 
\begin{equation}\label{biv_even} E(q;z):=\sum_{m\geq 0}\frac{z^mq^{m^2+m}}{(q^2;q^2)_m}.\end{equation} 
To see this, given a partition $\lambda$ with distinct even parts, remove  an even staircase, i.e., remove parts $2, 4, \ldots$ from each part of $\lambda$ starting with the smallest. We are left with a partition into even parts. In $\lambda$ there are as many parts as the height of the even staircase removed. 

 \smallskip
\noindent 
Hence, using \eqref{bivp_0} and \eqref{biv_even}, we have 
\begin{align} S_{c'}(q)-S_e(q) 
& = D\left(\sum_{m\geq 1}\frac{z^{m+1}q^{m^2+m}}{(zq^2;q^2)_m}-\sum_{m\geq 0}\frac{z^mq^{m^2+m}}{(q^2;q^2)_m} \right)\nonumber \\ 
& =\sum_{m\geq 1}\left(\frac{(m+1)q^{m^2+m}}{(q^2;q^2)_m}+ D \frac{q^{m^2+m}}{(zq^2;q^2)_m}-\frac{mq^{m^2+m}}{(q^2;q^2)_m}\right)\nonumber\\ 
& = \sum_{m\geq 1}\frac{q^{m^2+m}}{(q^2;q^2)_m} + D \sum_{m\geq 1}\frac{q^{m^2+m}}{(zq^2;q^2)_m}.\label{gf}
 \end{align}
 Now 
\begin{align*}\sum_{m\geq 0}\frac{q^{m^2+m}}{(zq^2;q^2)_m}& = \lim_{\tau\to 0}\sum_{m\geq 0}\frac{(-\frac{q^2}{\tau};q^2)_m(q^2;q^2)_m\tau^m}{(q^2;q^2)_m(zq^2;q^2)_m}\\ & = \lim_{\tau\to 0} \frac{(q^2;q^2)_\infty(-q^2;q^2)_\infty}{(zq^2;q^2)_\infty(\tau;q^2)_\infty}\sum_{m\geq 0}\frac{(z;q^2)_m(\tau;q^2)_mq^{2m}}{(q^2;q^2)_m(-q^2;q^2)_m}.
  \end{align*} 
The last equality was obtained from Heine's transformation  \cite[p.19, Cor. 2.3]{A98} by first replacing  $q$  with $q^2$ and then substituting 
$a= -\frac{q^2}{\tau}, b=q^2, c=zq^2, t=\tau$. 
 
\smallskip
\noindent
 Therefore
 \begin{align} \sum_{m\geq 0}\frac{q^{m^2+m}}{(zq^2;q^2)_m} 
& = \frac{(q^2;q^2)_\infty(-q^2;q^2)_\infty}{(zq^2;q^2)_\infty}  \label{S_c-S_o}\\ & \qquad + \frac{(q^2;q^2)_\infty(-q^2;q^2)_\infty}{(zq^2;q^2)_\infty}(1-z)\sum_{m\geq 1}\frac{(zq^2;q^2)_{m-1}q^{2m}}{(q^2;q^2)_m(-q^2;q^2)_m}. \nonumber 
\end{align}
We apply $D$ to \eqref{S_c-S_o} and use  \eqref{gf} to obtain  
\begin{align} S_{c'}(q)- S_e(q)   \nonumber
&=  \sum_{m\geq 1}\frac{q^{m^2+m}}{(q^2;q^2)_m}    +\sum_{m\geq 1}\frac{ (-q^2;q^2)_\infty q^{2m}}{1-q^{2m}}  - \sum_{m\geq 1}\frac{(-q^2;q^2)_\infty q^{2m}}{(1-q^{2m})(-q^2; q^2)_m}\nonumber\\
 &  = \sum_{m\geq 1}\frac{q^{m^2+m}}{(q^2;q^2)_m}  + \sum_{m\geq 1}\frac{(-q^2;q^2)_\infty q^{2m}}{1-q^{2m}} - \sum_{m\geq 1}\frac{q^{2m}(-q^{2m+1};q^2)_\infty}{1-q^{2m}}.\label{gfs}
\end{align}
The first expression in \eqref{gfs} is the generating function for partitions with distinct even parts. As in the proof of Theorem \ref{first beck}, the second expression is the generating function for $\mathcal E(n)$, i.e., pairs of partitions $(\lambda, (2m)^k)$, where $\lambda$ has distinct even parts, $m,k\geq 1$ and $|\lambda|+ 2mk=n$.  Finally, the last expression  generates partitions into even parts in which the smallest part may be repeated. These partitions correspond to pairs $(\lambda, (2m)^k)\in \mathcal E(n)$, such that the parts of $\lambda$ are greater than $2m$. 

\smallskip
\noindent
Then, the difference of the last two expressions is the generating function for 
$$\mathcal E'(n):=\{(\lambda, (2m)^k) \in \mathcal E(n) \mid \lambda \text{ has parts of size at most $2m$}\}.$$ 
As in the proof of Theorem \ref{first beck}, the subset of $\mathcal E'(n)$ consisting of pairs $(\lambda, (2m))$ such that $2m$ is not a part of  $\lambda$ is in one-to-one correspondence with the set of  parts that are not smallest in all partitions of $n$ into distinct even parts. We can view the set of partitions of $n$ into distinct even parts (generated by the first sum of \eqref{gfs}) as corresponding to the set of smallest parts in all partitions of $n$ into distinct even parts.

\smallskip
\noindent Next, we consider the remaining pairs of partitions $(\lambda, (2m)^k)$ in $\mathcal E'(n)$, i.e, pairs with $k\geq 2$ or pairs with $k=1$ and $2m$ is a part of $\lambda$. For each such pair  $(\lambda, (2m)^k)$, we create the partition $\mu=\lambda \cup (2m)^k$. Then, $\mu$ is a partition into even parts with exactly one even part repeated. If the repeated part of $\mu$, $2t$ is the smallest, the partition $\mu$ is obtained exactly once: from $(\mu\setminus {2t}^{b-1})$, where $b$ is the multiplicity of $2t$ in $\mu$. If the repeated part is not the smallest, each such partition is obtained twice: if $2t$ is the repeated part of $\mu$ and it appears with multiplicity $b\geq 2$, then $\mu$ is obtained from $(\mu\setminus (2t)^b, (2t)^b)$ and also from $(\mu\setminus (2t)^{b-1}, (2t)^{b-1})$. 

\smallskip
\noindent This completes the proof of the theorem. 
\end{proof}

\smallskip
\noindent

\section {On the parity of  $p(n \mid \text{distinct parts, odd/even rank})$} \label{odd-even rank}

\noindent
As mentioned in the previous section,  the generating function for the number of partitions of $n$ into distinct parts with even rank minus the number of partitions of $n$ into distinct parts with odd rank is given by  \cite{ADH88}
$$\sigma(q)=\sum_{n\geq0}\frac{q^{\binom{n+1}2}}{(-q;q)_n}.$$ Analogous to identity \eqref{m2-rank id}, we have 
\begin{equation}\label{rank id} \frac{1}{2} \left(\sum_{n=0}^\infty \frac{q^{\binom{n+1}2}}{(-q;q)_{n}}+(-q;q)_\infty\right)=p(n\mid \text{distinct  parts, even rank}).
\end{equation}
Computed modulo $2$, the identity \eqref{rank id} together with the pentagonal number theorem implies 
\begin{equation}\label{PNT} \sum_{n\geq0}\frac{q^{\binom{n+1}2}}{(-q;q)_n}\equiv (-q;q)_{\infty}\equiv \sum_{m\in\mathbb{Z}}q^{\frac{m(3m-1)}2}.\end{equation}
In this context, we consider the generating function \cite{ADH88} for the number $g(n,r)$ of distinct partitions of $n$ having rank $r$:
$$G(z,q):=\sum_{n\geq0}\sum_{r\geq 0} g(n,r)z^rq^n=\sum_{n\geq0}\frac{q^{\binom{n+1}2}}{(zq;q)_n}.$$
Thus, $\sigma(q)=G(-1,q)$. 

\smallskip
\noindent 
To determine the parity behavior of the number of distinct partitions of $n$ with odd, respectively even rank, we first 
prove a preliminary result.

\begin{lemma} \label{pre-lemma} We have
\begin{align*} \sum_{n\geq0}\frac{q^{\binom{n+1}2}}{(zq;q)_n}=\sum_{n\geq0}q^{\frac{n(3n+1)}2}(1+q^{2n+1})\prod_{j=1}^n\frac{z+q^j}{1-zq^j}.
\end{align*}
\end{lemma}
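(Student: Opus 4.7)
My plan is to deduce the identity as a specialization of the classical Rogers--Fine transformation
\begin{equation*}
\sum_{n\geq 0}\frac{(\alpha;q)_n \tau^n}{(\beta;q)_n} \;=\; \sum_{n\geq 0}\frac{(\alpha;q)_n(\alpha\tau q/\beta;q)_n \beta^n\tau^n q^{n^2-n}(1-\alpha\tau q^{2n})}{(\beta;q)_n(\tau;q)_{n+1}}.
\end{equation*}
In this identity I would substitute $\beta = zq$ and $\tau = -q/\alpha$, keep $\alpha$ as a free parameter, and then take the limit $\alpha\to\infty$. The choice is motivated by the standard asymptotic $(\alpha;q)_n \sim (-\alpha)^n q^{\binom{n}{2}}$, which gives $(\alpha;q)_n(-q/\alpha)^n \to q^{\binom{n+1}{2}}$. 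Consequently the left-hand side of Rogers--Fine collapses term-by-term to $\sum_{n\geq 0} q^{\binom{n+1}{2}}/(zq;q)_n$, matching the left-hand side of the lemma exactly.

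Next I would simplify the right-hand side in the limit. With these parameters $\alpha\tau q/\beta = -q/z$, so $(\alpha\tau q/\beta;q)_n = (-q/z;q)_n$, and $\alpha\tau q^{2n} = -q^{2n+1}$, producing the desired factor $1+q^{2n+1}$. Collecting the $q$-powers --- namely $q^{n(n-1)/2}$ inherited from $(\alpha;q)_n/(-\alpha)^n$, a factor $q^{2n}$ coming from pairing $\beta^n\tau^n = (-z)^n q^{2n}/\alpha^n$ with the surviving $(-\alpha)^n$, and the explicit $q^{n^2-n}$ --- produces the exponent $n(3n+1)/2$ attached to $z^n$. The elementary identity $(-q/z;q)_n\,z^n = \prod_{j=1}^n(z+q^j)$ together with the surviving denominator $(zq;q)_n$ then recovers precisely the product $\prod_{j=1}^n (z+q^j)/(1-zq^j)$ appearing in the lemma.

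The whole argument is a single application of Rogers--Fine followed by one limit and routine bookkeeping of $q$-powers; the factor $(-q/\alpha;q)_{n+1}$ in the denominator tends to $1$ as $\alpha\to\infty$. There is no analytic subtlety beyond justifying termwise passage to the limit, which is standard for $|q|<1$ with $z$ away from the obvious poles $z = q^{-k}$, $k\geq 1$. The only mild pitfall is the careful tracking of the competing $\alpha$-dependent factors $(\alpha;q)_n$, $\beta^n\tau^n$, and $(\tau;q)_{n+1}$, but all cancellations are clean once the calculation is organized around the target monomial $z^n q^{n(3n+1)/2}$.
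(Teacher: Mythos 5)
Your proposal is correct and is essentially the paper's own proof: the paper applies the Rogers--Fine identity with $\beta=zq$ and $\alpha=-q/\tau$, letting $\tau\to 0$, which is the same substitution and degenerate limit you perform (you merely parametrize by $\alpha\to\infty$ with $\tau=-q/\alpha$ instead of by $\tau\to 0$). The bookkeeping of $q$-powers you describe matches the simplification the paper leaves implicit.
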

\begin{proof} In the Rogers-Fine identity \cite[p. 233, eq. (9.1.1)]{AB05}, replace $\alpha=-\frac{q}{\tau}, \beta=zq$, and let $\tau\rightarrow0$. Simplification yields the desired result.
\end{proof}

\begin{theorem} \label{odd-odd-odd-even} Let  $n$ be a positive integer.  Then, 
\begin{itemize}\item[(i)] $p(n \mid \text{distinct parts, odd rank})$ is odd  if and only if $n=\frac{k(3k-(-1)^k)}2$ for some positive integer $k$;
 \item[(ii)] $p(n \mid \text{distinct parts, even rank})$ is odd  if and only if $n=\frac{k(3k+(-1)^k)}2$ for some positive integer $k$.\end{itemize}
\end{theorem}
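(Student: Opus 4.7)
The plan is to derive congruences modulo $2$ for the generating functions
$P_e(q)=\sum_{n\ge 0} p(n\mid\text{dist.\ parts, even rank})\,q^n$ and
$P_o(q)=\sum_{n\ge 0} p(n\mid\text{dist.\ parts, odd rank})\,q^n$
using Lemma \ref{pre-lemma}. Since $G(1,q)=(-q;q)_\infty$ generates partitions with distinct parts while $G(-1,q)=\sigma(q)$, we have $2P_e(q)=G(1,q)+G(-1,q)$ and $2P_o(q)=G(1,q)-G(-1,q)$. First I would substitute $z=\pm 1$ in Lemma \ref{pre-lemma} and combine, using $(q;q)_n(-q;q)_n=(q^2;q^2)_n$, to obtain
\[
G(1,q)\pm G(-1,q)=\sum_{n\ge 0}q^{n(3n+1)/2}(1+q^{2n+1})\cdot\frac{(-q;q)_n^2\pm(-1)^n(q;q)_n^2}{(q^2;q^2)_n}.
\]

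The critical step is a modulo-$4$ refinement: since $(1+q^j)-(1-q^j)=2q^j$, the difference $(-q;q)_n-(q;q)_n$ lies in $2\mathbb{Z}[q]$, and squaring upgrades this to $(-q;q)_n^2\equiv(q;q)_n^2\pmod{4}$. Therefore
\[
(-q;q)_n^2+(-1)^n(q;q)_n^2 \equiv (1+(-1)^n)(q;q)_n^2\pmod{4},
\]
which vanishes modulo $4$ when $n$ is odd and equals $2(q;q)_n^2\pmod 4$ when $n$ is even, with the roles reversed for the minus sign. Dividing by $2$ and reducing modulo $2$, the resulting sums pick out only one parity of $n$, so
\[
P_e(q)\equiv\sum_{n\text{ even}}\!q^{n(3n+1)/2}(1+q^{2n+1})\frac{(q;q)_n^2}{(q^2;q^2)_n}\pmod 2,
\]
with the analogous formula for $P_o(q)$ summing over $n$ odd.

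To finish, I would exploit the identity $\tfrac{(q;q)_n^2}{(q^2;q^2)_n}=\prod_{j=1}^n\tfrac{1-q^j}{1+q^j}$, each factor expanding as $1-2q^j+2q^{2j}-\cdots\equiv 1\pmod{2}$, so the whole product is $\equiv 1\pmod 2$. Hence
\[
P_e(q)\equiv\!\sum_{n\text{ even}}\!q^{n(3n+1)/2}(1+q^{2n+1})\pmod 2,\qquad P_o(q)\equiv\!\sum_{n\text{ odd}}\!q^{n(3n+1)/2}(1+q^{2n+1})\pmod 2.
\]
Parameterizing $n=2m$ produces exponents $6m^2+m=\tfrac{(2m)(3(2m)+1)}{2}$ and $6m^2+5m+1=\tfrac{(2m+1)(3(2m+1)-1)}{2}$, which unify as $\tfrac{k(3k+(-1)^k)}{2}$ for $k=2m$ and $k=2m+1$; parameterizing $n=2m+1$ analogously yields $\tfrac{k(3k-(-1)^k)}{2}$ for $k=2m+1$ and $k=2m+2$, matching the claimed classification.

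The main obstacle I anticipate is exactly the modulo-$4$ refinement of $(-q;q)_n^2\equiv(q;q)_n^2$: without improving the obvious mod-$2$ congruence to mod $4$, the subsequent division by $2$ would lose all mod-$2$ information, and $G(1,q)\pm G(-1,q)$ would only be controlled trivially. Once this refinement is in place, the remaining simplifications rest on the Frobenius-type congruence $(q;q)_n^2\equiv(q^2;q^2)_n\pmod 2$ for formal power series, and the final identification of exponents with the generalized-pentagonal-type expressions is elementary.
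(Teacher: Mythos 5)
Your proof is correct, but it takes a genuinely different route from the paper's. The paper works with the derivative operator $D=\left.\partial_z\right|_{z=1}$: since $\sum_r r\,g(n,r)\equiv\sum_{r\ \mathrm{odd}}g(n,r)\pmod 2$, the odd-rank count is read off from $DG(z,q)$, which is computed from Lemma \ref{pre-lemma} by the product rule together with the observations $D\frac{z+x}{1-zx}\equiv 1$ and $\frac{1+x}{1-x}\equiv 1\pmod 2$; part (ii) is then obtained from $D(zG)=G(1,q)+DG(z,q)$ and the pentagonal number theorem. You instead specialize $z=\pm 1$, write $2P_e=G(1,q)+G(-1,q)$ and $2P_o=G(1,q)-G(-1,q)$, and the essential new ingredient is the mod-$4$ refinement $(-q;q)_n^2\equiv(q;q)_n^2\pmod 4$ (correctly justified: the difference of the products lies in $2\mathbb{Z}[q]$, and squaring upgrades this), which lets you divide by $2$ without losing mod-$2$ information; the reduction $\frac{(q;q)_n^2}{(q^2;q^2)_n}=\prod_{j=1}^n\frac{1-q^j}{1+q^j}\equiv 1\pmod 2$ and the exponent bookkeeping are all sound, and I verified the parameterizations match the claimed forms $\frac{k(3k\pm(-1)^k)}{2}$. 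What your approach buys is symmetry — both parities are handled by one mechanism, and you never need the pentagonal number theorem or the congruence $(-q;q)_\infty\equiv\sum_m q^{m(3m-1)/2}$; what the paper's approach buys is that it avoids any mod-$4$ analysis, working entirely mod $2$ after a single differentiation, and its computation of $DG$ yields as a by-product the $q$-series identity recorded in the corollary following the theorem.
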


\begin{proof} As usual, operator $D$ computes the derivative $\frac{d}{dz}$ and evaluates at $z=1$. All  congruences in this proof are {\it modulo $2$}.  It is straightforward to check that, expanded as a series, $D\frac{z+x}{1-zx}\equiv1$ and  $\frac{1+x}{1-x}\equiv 1$,  i.e., all coefficients of powers of $x$ except the constant term are even. Using these facts and  Lemma \ref{pre-lemma}, we have
\begin{align} DG(z,q)&=\sum_{n\geq0}q^{\frac{n(3n+1)}2}(1+q^{2n+1})\sum_{j=1}^n\prod_{\substack{i=1 \\ i\neq j}}^n\frac{1+q^i}{1-q^i}D\frac{z+q^j}{1-zq^j}\label{der} \\
&\equiv \sum_{n\geq0}q^{\frac{n(3n+1)}2}(1+q^{2n+1})\sum_{j=1}^n1  \nonumber \\
&\equiv \sum_{\substack{n\geq0 \\ \text{$n$ odd}}} q^{\frac{n(3n+1)}2}(1+q^{2n+1})=\sum_{n\geq0}q^{(2n+1)(3n+2)}(1+q^{4n+3}), \nonumber
\end{align}
which is equivalent to the first assertion of the theorem.

\smallskip
\noindent 
To prove the second assertion, we calculate $D\,zG=G(1,q)+DG(z,q)$ by making use of 
(\ref{PNT}) and \eqref{der}
\begin{align*} D\,zG(z,q)&=\sum_{n\geq1}\frac{q^{\binom{n+1}2}}{(q;q)_n}+\sum_{n\geq0}q^{\frac{n(3n+1)}2}(1+q^{2n+1})\sum_{j=1}^n\prod_{\substack{i=1 \\ i\neq j}}^n\frac{1+q^i}{1-q^i}D\frac{z+q^j}{1-zq^j}  \\
&\equiv \sum_{m\in\mathbb{Z}^{*}}q^{\frac{m(3m-1)}2}+ \sum_{k\geq1}q^{\frac{k(3k-(-1)^k)}2} \\
&\equiv \sum_{k\geq1}q^{\frac{k(3k+(-1)^k)}2},
\end{align*}
where $\mathbb{Z}^{*}$ denotes the set of non-zero integers. The proof is complete.
\end{proof}

\smallskip
\noindent
The preceding discussion leads to the following  $q$-series identity.

\begin{corollary} We have
$$\sum_{n\geq1}\frac{q^{\binom{n+1}2}}{(q;q)_n}\sum_{m=1}^n\frac{q^m}{1-q^m}
=\sum_{n\geq1}\frac{q^{\frac{n(3n+1)}2}(1+q^{2n+1})\,(-q;q)_n}{(q;q)_n}\sum_{j=1}^n\frac{1+q^{2j}}{1-q^{2j}}.$$
\end{corollary}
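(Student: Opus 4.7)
The plan is to evaluate $DG(z,q)$ in two different ways, using (i) the original defining series
$$G(z,q)=\sum_{n\geq0}\frac{q^{\binom{n+1}2}}{(zq;q)_n}$$
and (ii) the Rogers--Fine transformed series supplied by Lemma \ref{pre-lemma},
$$G(z,q)=\sum_{n\geq0}q^{\frac{n(3n+1)}2}(1+q^{2n+1})\prod_{j=1}^n\frac{z+q^j}{1-zq^j}.$$
Comparing the two outputs will give the asserted equality.

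For (i), take the logarithmic derivative of $\frac{1}{(zq;q)_n}=\prod_{i=1}^n(1-zq^i)^{-1}$ to obtain
$$\frac{d}{dz}\frac{1}{(zq;q)_n}=\frac{1}{(zq;q)_n}\sum_{m=1}^n\frac{q^m}{1-zq^m}.$$
Setting $z=1$ and summing in $n$ yields
$$DG(z,q)=\sum_{n\geq1}\frac{q^{\binom{n+1}2}}{(q;q)_n}\sum_{m=1}^n\frac{q^m}{1-q^m},$$
which is the left-hand side of the corollary (the $n=0$ term vanishes because the inner sum is empty).

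For (ii), apply logarithmic differentiation to each factor of the product in Lemma \ref{pre-lemma}:
$$\frac{d}{dz}\log\frac{z+q^j}{1-zq^j}=\frac{1}{z+q^j}+\frac{q^j}{1-zq^j}.$$
At $z=1$, a short combination gives
$$\frac{1}{1+q^j}+\frac{q^j}{1-q^j}=\frac{(1-q^j)+q^j(1+q^j)}{(1+q^j)(1-q^j)}=\frac{1+q^{2j}}{1-q^{2j}}.$$
Moreover the product itself at $z=1$ equals $\prod_{j=1}^n\frac{1+q^j}{1-q^j}=\frac{(-q;q)_n}{(q;q)_n}$. Consequently
$$DG(z,q)=\sum_{n\geq1}\frac{q^{\frac{n(3n+1)}2}(1+q^{2n+1})(-q;q)_n}{(q;q)_n}\sum_{j=1}^n\frac{1+q^{2j}}{1-q^{2j}},$$
which is the right-hand side. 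Equating the two expressions for $DG(z,q)$ completes the proof.

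There is essentially no obstacle: the Lemma has already done the heavy lifting (it is a specialization of Rogers--Fine), and the rest is a mechanical log-derivative evaluation. The only mildly delicate point is the algebraic identity $\frac{1}{1+q^j}+\frac{q^j}{1-q^j}=\frac{1+q^{2j}}{1-q^{2j}}$, which neatly merges the two contributions from the numerator and denominator of $\frac{z+q^j}{1-zq^j}$ at $z=1$.
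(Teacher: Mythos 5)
Your proposal is correct and is essentially the paper's own argument: the paper's proof simply observes that both sides equal $DG(z,q)$, the left via logarithmic differentiation of the defining series and the right via the expression derived from Lemma \ref{pre-lemma}. You have merely supplied the routine details (including the identity $\frac{1}{1+q^j}+\frac{q^j}{1-q^j}=\frac{1+q^{2j}}{1-q^{2j}}$), all of which check out.
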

\begin{proof} Both sides of the identity equal $D\, G(z,q)$. 
The left-hand side is obtained by  logarithmic-differentiation, while the right-hand side is obtained from \eqref{der}.
\end{proof}

\section{Binary inversion sums and sums of hook lengths} \label{bi-hooks}

\noindent
 A pair $i<j$ is an {\it inversion} of a binary string $w=w_1\cdots w_n$ if $w_i=1>0=w_j$. The {\it binary inversion sum} of $w$ is given by the sum
$s(w)=\sum(j-i)$ over all inversions of $w$. Given a binary string (or word)   $w=w_1\cdots w_n$, the {\it position} of $w_i$  in $w$ is $i$. 

\smallskip
\noindent
Given a partition $\lambda$, its {\it bit string},  $b(\lambda)$, is a binary word starting with $1$ and ending with $0$ defined as follows. Start at SW corner of the Young diagram  and travel along the outer profile going NE. For each step to the right, record a $1$; for each step up, record a $0$. By the {\it length} of $b(\lambda)$ we mean the number of digits in $b(\lambda)$. For instance, for the partition $\lambda= (5,3,3,2,1)$ of Example \ref{eg1} we get the bit string $b(\lambda)=1010100110$ and the length of $b(\lambda)$ is $10$.

\smallskip
\noindent Before we state the main result of this section, we give a motivating example. 
\begin{example} We list the partitions of $n=2, 3$, and $4$. For each partition $\lambda$, we give the bit string $b(\lambda)$,  the binary inversion sum $s(b(\lambda))$,  the list $H_\lambda$ of hook lengths of cells of $\lambda$, and the sum of all hook lengths of $\lambda$, i.e.,  $\sum_{u\in\lambda}h^{\lambda}(u)$.  Notice that we give $H_{\lambda}$ as a list of sub-lists of hook lengths along rows of the Young diagram.

\begin{center}
\begin{tabular}{ |c|c|c|c|c| } 
\hline
partition $\lambda$ & $b(\lambda)$ & $s(b(\lambda))$ & $H_{\lambda}$ & sum of elements in $H_{\lambda}$ \\
\hline
(2) & 110 & 3 & [(2,1)] & 3 \\
(1,1) &100 & 3 & [(2),(1)]& 3 \\
(3) & 1110 & 6 & [(3,2,1)] & 6 \\
(2,1) & 1010 & 5 &[(3,1),(1)] & 5 \\
(1,1,1) & 1000 & 6 & [(3),(2),(1)] & 6 \\
(4) & 11110 & 10 & [(4,3,2,1)]  & 10 \\
(3,1) & 10110 & 8 & [(4,2,1),(1)] & 8 \\
(2,2) & 1100 & 8 & [(3,2),(2,1)]  & 8 \\
(2,1,1) & 10010 & 8 & [(4,1),(2),(1)] & 8 \\
(1,1,1,1) & 10000 & 10 & [(4),(3),(2),(1)] & 10 \\
\hline
\end{tabular}
\end{center}
\end{example}

\begin{theorem} Given any partition $\lambda$, the sum of its hooks, $\sum_{u\in\lambda}h^{\lambda}(u)$, equals the inversion sum, $s(b(\lambda))$, of its binary bit $b(\lambda)$.
\end{theorem}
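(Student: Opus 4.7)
The plan is to construct a weight-preserving bijection between the cells of $\lambda$ and the inversions of $b(\lambda)$: the cell $(r,c) \in \lambda$ will correspond to an inversion whose contribution $j-i$ to $s(b(\lambda))$ is exactly the hook length $h^\lambda(r,c)$; summing then yields the identity.

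The first step is to locate the $1$s and $0$s of $b(\lambda)$ explicitly. Since $b(\lambda)$ records the SW-to-NE profile walk of the Young diagram, each $1$ corresponds to a horizontal step along the bottom edge of a unique column $c$, and each $0$ to a vertical step along the right edge of a unique row $r$. Writing $\ell = \ell(\lambda)$, a direct count of the right and up steps that precede each such edge along the profile gives
$$p_c := c + \ell - \lambda'_c \qquad \text{(position of the $1$ for column $c$)},$$
$$q_r := \lambda_r + \ell - r + 1 \qquad \text{(position of the $0$ for row $r$)}.$$
Because the walk has length $\lambda_1 + \ell$ and crosses each column and each row exactly once, the set $\{p_c\}$ accounts for all $\lambda_1$ positions of $1$s and the set $\{q_r\}$ accounts for all $\ell$ positions of $0$s.

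Two short computations then finish the argument. First,
$$q_r - p_c = \lambda_r + \lambda'_c - r - c + 1 = h^\lambda(r,c).$$
Second, $p_c < q_r$ if and only if $(r,c) \in \lambda$: if $(r,c) \in \lambda$ then $c \le \lambda_r$ and $r \le \lambda'_c$, giving $q_r - p_c \ge 1$ via the identity above; conversely, if $(r,c) \notin \lambda$ then both $c > \lambda_r$ and $r > \lambda'_c$, forcing $q_r - p_c \le -1$. Hence the map $(r,c) \mapsto (p_c, q_r)$ is a bijection from cells of $\lambda$ onto inversions of $b(\lambda)$, and summing $q_r - p_c$ over inversions yields $\sum_{u\in\lambda} h^\lambda(u) = s(b(\lambda))$. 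The step that will demand most care is the bookkeeping behind the formulas for $p_c$ and $q_r$, namely verifying that every right (resp.\ up) step of the profile is matched with exactly one column (resp.\ row) so that no $1$ or $0$ is missed or double-counted; once those two formulas are in hand, the rest is a one-line algebraic check.
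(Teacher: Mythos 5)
Your proof is correct and is in essence the same argument as the paper's: a weight-preserving bijection matching the cell $(r,c)$ with the inversion formed by the $1$ recording the bottom edge of column $c$ and the $0$ recording the right edge of row $r$, whose position difference is $h^\lambda(r,c)$. The only difference is presentational --- the paper identifies this pair as the endpoints of the profile substring $b(\lambda^{(r,c)})$ of the sub-diagram hanging from $(r,c)$, while you compute the positions $p_c$ and $q_r$ explicitly (and your formulas check out).
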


\begin{proof} Let $\lambda$ be a partition of $n$ and $b(\lambda)$ its bit string.  The number of parts $\ell(\lambda)$ of $\lambda$ equals the number of $0$ digits in $b(\lambda)$. The perimeter of $\lambda$, which is defined as the hook-length $h^{\lambda}(1,1)$, is equal to the length of $b(\lambda)$ minus $1$. 

\smallskip
\noindent
Each  cell $(a,c)$ in the Young diagram of $\lambda$ defines a sub-diagram consisting of the cell $(a,c)$ and all cells of  the Young diagram of  $\lambda$ weakly to the right and weakly below $(a,c)$. Denote the partition corresponding to this sub-diagram by $\lambda^{(a,c)}$. Then $b(\lambda^{(a,c)})$ is the sub-string of $b(\lambda)$ starting at the $c^{th}$ digit from the left equal to $1$ (we do not count the $0$ digits) and ending after the $a^{th}$ digit from the right equal to $0$ (we do not count the $1$ digits). The length of $b(\lambda^{(a,c)})$ minus $1$ is precisely the hook length $h(a,c)$ in $\lambda$.  Moreover, if the $c^{th}$ digit from the left equal to $1$ is in position $i$ in $b(\lambda)$ and the $a^{th}$ digit from the right equal to $0$  is in position $j$, then $j-i$ is precisely the length of $b(\lambda^{(a,c)})$ minus $1$, i.e., $h(a,c)$. 
Conversely, every inversion $i<j$ determines a sub-string of $b(\lambda)$ that starts with $1$ in position $i$ in $b(\lambda)$  and ends with $0$ in position $j$ in $b(\lambda)$. The length of the sub-string minus $1$ is precisely $j-i$ and is the hook length $h(a,c)$  where $c$ is the number of $1$ digits in positions $\leq i$ and $a$ is the number of $0$ digits in positions $\geq j$ in $b(\lambda)$. Thus, for each inversion $i<j$, the difference $j-i$ is the hook length of a unique cell in the Young diagram of $\lambda$. 
\end{proof}

\section{On the $x$-ray list of a partition} \label{two stats} 

\noindent
Let $\lambda=(\lambda_1,\lambda_2,\dots,\lambda_{\ell(\lambda)})$ be an integer partition of length $\ell(\lambda)$ and designate $m:=\max\{\lambda_1,\ell(\lambda)\}$. We construct an $m\times m$ matrix whose entry in the $i^{th}$ row and $j^{th}$ column is $1$ if $(i,j)$ is a cell in the Young diagram of $\lambda$ and $0$ otherwise. Since the Young diagram of a partition is also known as the Ferrers diagram, the matrix defined here is referred to as the {\it Ferrers matrix} of $\lambda$ on OEIS \cite{FMatrix}. 
\begin{example} \label{eg 3} Given the partition, 
$\lambda=(4,3,1)\vdash 8$, we have  $m=4$ and the corresponding $4\times 4$ matrix is shown below. 
$$\begin{bmatrix} 1&1&1&1 \\ 1&1&1&0 \\ 1&0&0&0 \\ 0&0&0&0
\end{bmatrix}$$
\end{example}
\noindent 
Given a partition $\lambda$, we add the elements of each anti-diagonal in the Ferrers matrix of $\lambda$ to obtain a composition, denoted by $\lambda_x$,  which we call the {\it $x$-ray list} of $\lambda$.  
For the partition $\lambda=(4,3,1)$ in Example \ref{eg 3}, we have $\lambda_x=(1,2,3,2,0,0,0)$. 

\smallskip
\noindent  Since  the number of entries equal to $1$ in an anti-diagonal is invariant under conjugation, we have $\lambda_x=\lambda'_x$  for all partitions $
\lambda$. 
Thus, when studying $x$-ray lists, it is enough to   consider   partitions $\lambda$ satisfying $\lambda_1\geq \ell(\lambda)$. So, the Ferrers matrix is  an $M\times M$ matrix, where $M=\lambda_1$.

\smallskip
\noindent
The first result, below, proves that the $x$-ray lists of partitions are unimodal compositions. In addition, the sub-sequence of an $x$-ray consisting of the initial terms  up to the first occurrence of a peak is strictly increasing.

\begin{lemma} \label{x-ray-lemma} If $\lambda$ is a partition, then $\lambda_x$ is of the form $(1,2,3,\dots,n,a_1,a_2,\dots,a_r)$, where $n\geq a_1\geq a_2\geq\cdots\geq a_r$.
\end{lemma}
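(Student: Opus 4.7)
The plan is to analyze the sequence $(d_1, d_2, \ldots)$ whose $k$-th term is the $k$-th entry of $\lambda_x$, namely the number of cells of $\lambda$ on the $k$-th anti-diagonal of the Ferrers matrix. Setting $\lambda_i = 0$ for $i > \ell(\lambda)$, we have $d_k = |A_k|$ where
$$A_k := \{i : 1 \leq i \leq k,\ \lambda_i \geq k+1-i\}.$$

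First I would isolate the strictly increasing prefix. The equality $d_k = k$ holds if and only if every cell on the $k$-th anti-diagonal lies in $\lambda$, equivalently $\delta_k \subseteq \lambda$. Since $\delta_k \subseteq \delta_{k+1}$, the set $\{k : d_k = k\}$ is downward closed; let $n$ denote its maximum, so that $d_k = k$ for $1 \leq k \leq n$ and $d_{n+1} \leq n$.

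Next I would prove two comparison estimates between consecutive terms. The first, $d_{k+1} \leq d_k + 1$, is immediate from the inclusion $A_{k+1} \cap \{1, \ldots, k\} \subseteq A_k$ (since $\lambda_i \geq k+2-i$ is strictly stronger than $\lambda_i \geq k+1-i$), together with the fact that $k+1$ is the only additional possible index in $A_{k+1}$. The second, and main step, is: if $d_{k+1} = d_k + 1$ then $d_k = k$. Equality in the first estimate forces both $\lambda_{k+1} \geq 1$ (so that $k+1 \in A_{k+1}$) and $\lambda_i \geq k+2-i$ for every $i \in A_k$. In particular, $\lambda_k \geq \lambda_{k+1} \geq 1$ gives $k \in A_k$. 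Suppose for contradiction that some index in $\{1, \ldots, k-1\}$ lies outside $A_k$, and take the maximal such $i^*$; then $i^*+1 \in A_k$, whence $\lambda_{i^*+1} \geq k - i^*$, while $\lambda_{i^*} \leq k - i^*$. Partition monotonicity $\lambda_{i^*} \geq \lambda_{i^*+1}$ forces $\lambda_{i^*+1} = k - i^*$, contradicting the equality condition $\lambda_{i^*+1} \geq k+2-(i^*+1) = k - i^* + 1$. Hence $A_k = \{1, \ldots, k\}$ and $d_k = k$.

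Combining these, an easy induction using the first estimate gives $d_k < k$ for all $k > n$, and the contrapositive of the second then yields $d_{k+1} \leq d_k$ for all $k \geq n$. Together with $d_1, \ldots, d_n = 1, 2, \ldots, n$, this is precisely the asserted form, with $a_j := d_{n+j}$. The main obstacle is the second comparison step, where the partition structure (rather than an arbitrary subset of the $M \times M$ grid) enters essentially, via the interplay of the equality condition from the crude counting bound with the monotonicity $\lambda_i \geq \lambda_{i+1}$.
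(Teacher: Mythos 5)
Your proof is correct and follows essentially the same strategy as the paper's: identify the strictly increasing prefix with the anti-diagonals fully contained in $\lambda$ (equivalently $\delta_k\subseteq\lambda$), and show that once an anti-diagonal has a missing cell the sums can no longer increase. Where the paper argues this last point informally via ``zeros propagate zeros,'' you make it rigorous through the inclusion $A_{k+1}\cap\{1,\dots,k\}\subseteq A_k$ and the maximality argument characterizing when $d_{k+1}=d_k+1$ can occur, which is a welcome tightening of the published argument.
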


\begin{proof} Let $\lambda$ be a partition such that $\lambda_1\geq\ell(\lambda)$ and let $M=\lambda_1$. Consider all initial anti-diagonals with no entries equal to $0$. These contribute the following initial parts in the composition $\lambda_x$:
\begin{itemize}
\item[(i)]$1, 2, 3,\dots, n$ where $n\leq M$, or \medskip

\item[(ii)] $1, 2, 3, \dots, M, M-1, M-2, \dots, M-j$.
\end{itemize}

\smallskip
\noindent
Let us consider the first anti-diagonal containing  a $0$ entry.  In case (i),  the sum of its entries,  $a_1$,  will be at most $n$. In case (ii), the sum of its entries will be strictly less than $M-j$. Furthermore, zeros propagate zeros, i.e., in a column,  below a $0$ there are only  $0$ entries and, in a row, to the right of a $0$ there are only $0$ entries. Thus, for any string of consecutive zeros in a given anti-diagonal, there is a string with at least as many zeros in the next anti-diagonal. This necessitates a non-increase in the sum of entries of the next anti-diagonal.\end{proof}
\begin{example} In the partially filled Ferrers matrices below,

$$\begin{bmatrix}
1&1&1&1&1 \\ 1&1&1&0&\square \\ 1&1&0&\square \\ 1&0&\square \\ 1&\square
\end{bmatrix} \qquad \qquad
\begin{bmatrix}
1&1&1&1&1 \\ 1&1&1&0&\square \\ 1&1&0&\square \\ 1&0&\square \\ 0&\square
\end{bmatrix},$$

\bigskip
\noindent 
the positions marked with $\square$ are forced to be filled with $0$.\end{example}

\begin{theorem} \label{x-ray-thm} The number of different $x$-ray lists  of $n$ equals to the number of partitions of $n$ into distinct parts. \end{theorem}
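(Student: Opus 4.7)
\smallskip\noindent
\emph{Proof plan.} By Lemma~\ref{x-ray-lemma}, after trimming trailing zeros, every $x$-ray list has the form
\begin{equation*}
(1,2,\dots,k,\,a_1,a_2,\dots,a_r),\qquad k\ge a_1\ge a_2\ge\cdots\ge a_r\ge 1
\end{equation*}
(with $r=0$ giving $(1,2,\dots,k)$). The plan is to establish the converse --- that every composition of this form is $\lambda_x$ for some $\lambda$ --- and then count such compositions via Euler's identity.

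For the converse, given such a composition, I would set $\mu:=(a_1,\dots,a_r)$, a partition with parts of size $\le k$, and let $\mu'$ be its conjugate, padded with zeros to length $k$. Define
$$\lambda:=\delta_k+\mu'=(k+\mu_1',\,k-1+\mu_2',\,\dots,\,1+\mu_k').$$
Since $\mu'$ is weakly decreasing, $\lambda$ is a partition. A direct anti-diagonal count confirms $\lambda_x=(1,2,\dots,k,a_1,\dots,a_r,0,0,\dots)$: the cell $(i,d+1-i)$ lies in the Young diagram of $\lambda$ exactly when $\mu_i'\ge d-k$. For $d\le k$ this is automatic on $1\le i\le d$, contributing $d$ cells; for $d=k+j$ with $j\ge 1$ the condition $\mu_i'\ge j$ is equivalent to $i\le a_j$, contributing exactly $a_j$ cells.

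With the bijection in hand, truncated $x$-ray lists of partitions of $n$ correspond to pairs $(k,\mu)$ with $k\ge 0$ and $\mu$ a partition into parts of size $\le k$ with $|\mu|=n-\binom{k+1}{2}$. Summing over $k$ and $\mu$ yields the generating function
$$\sum_{k\ge 0}\frac{q^{\binom{k+1}{2}}}{(q;q)_k}=\prod_{j\ge 1}(1+q^j),$$
where the equality is the $t=q$ specialization of the classical identity $\sum_{n\ge 0}t^nq^{\binom{n}{2}}/(q;q)_n=\prod_{n\ge 0}(1+tq^n)$ \cite[p.~19, Cor.~2.2]{A98}. The right-hand side is the generating function for partitions into distinct parts, completing the argument.

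The one delicate step is the anti-diagonal accounting for $\lambda=\delta_k+\mu'$; once this is in hand, everything else is a routine appeal to Euler.
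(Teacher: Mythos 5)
Your proof is correct, and its second half takes a genuinely different route from the paper's. Your construction $\lambda=\delta_k+\mu'$ is in fact identical to the map $\phi$ in the paper's proof (fill each anti-diagonal with ones starting from the top row), and your explicit anti-diagonal count verifying $\lambda_x=(1,2,\dots,k,a_1,\dots,a_r)$ is a welcome check that the paper leaves implicit. Where you diverge is in how the count is concluded: the paper never characterizes the image of the $x$-ray map, but instead produces injections in both directions --- $\pi\mapsto\pi_x$ is injective on partitions into distinct parts (by comparing the anti-diagonal through the endpoint of the first part where two such partitions differ), and $\lambda_x\mapsto\phi(\lambda_x)$ is injective from $x$-ray lists into distinct partitions --- yielding equality of cardinalities with no generating functions. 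You instead identify the set of $x$-ray lists of $n$ exactly with the compositions $(1,2,\dots,k,a_1,\dots,a_r)$ with $k\ge a_1\ge\cdots\ge a_r\ge1$, and count these via $\sum_{k\ge0}q^{\binom{k+1}{2}}/(q;q)_k=(-q;q)_\infty$. Your route buys a clean enumerative description of which unimodal compositions occur as $x$-ray lists, at the cost of leaving the bijection with distinct partitions implicit; the paper's route makes that bijection explicit, which is what feeds the worked example and sits naturally alongside Theorem \ref{biject-triangle}. One small point you should make explicit: the pair $(k,\mu)$ is recoverable from the composition (since $a_1\le k$, the value $k$ is the largest $i$ such that the first $i$ entries are $1,2,\dots,i$), so the generating-function sum counts each $x$-ray list exactly once.
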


\begin{proof} We first show that if $\pi=(b_1, b_2, \ldots)$ and $\rho=(c_1, c_2, \ldots)$ are  partition of $n$ into distinct parts such that  $\pi\neq \rho$, then their $x$-ray lists are different, i.e., $\pi_x\neq\rho_x$. Let $i$ be the smallest integer such that $b_i\neq c_i$, say $b_i>c_i$.   In the Ferrers matrix of a partition with distinct parts,  in an anti-diagonal, the entries SW of a $0$ are all equal to $0$. Therefore, the anti-diagonal of $\pi$ containing the end point of part $b_i$, i.e., the  $(i+b_i-1)^{st}$ anti-diagonal,  contains at least one more $1$  than the same anti-diagonal in $\rho$. Then, $\pi_x\neq \rho_x$.
Consequently, the number of $x$-ray lists is at least as large as the number of partitions into distinct parts.

\smallskip
\noindent On the other hand, each $x$-ray list, $\lambda_x$, of $n$ corresponds to a partition $\mu$ of $n$ with distinct parts as follows.  If the $i^{th}$  entry of $\lambda_x$  is $r$, fill in the $i^{th}$ anti-diagonal of a matrix with $r$ entries equal to $1$ (starting in the first row of the matrix) followed by $0$ entries. Let $\phi(\lambda_x)$ be the partition  whose Ferrers matrix  is the matrix obtained above. By Lemma \ref{x-ray-lemma} and the construction of the matrix, $\phi(\lambda_x)$ is a  partition with distinct parts.
Hence, the number of $x$-ray lists is no more than the number of partitions of $n$ into distinct parts. The proof is complete.
\end{proof}

\noindent
\begin{example} We illustrate the proof of Theorem \ref{x-ray-thm} for $n=7$.

\bigskip
\begin{center}
\begin{tabular}{ |c|c|c|c| } 
\hline
$\lambda$ with $\lambda_1\geq\ell(\lambda)$ & $\lambda_x$ & $\phi(\lambda_x)$ \\
\hline
\multirow{1}{4em} {7} & (1,1,1,1,1,1,1) & 7 \\
\hline
\multirow{1}{4em} {61} & (1,2,1,1,1,1) & 61 \\ 
\hline
\multirow{2}{4em}{52 \\ 511} & (1,2,2,1,1) & 52 \\ 
& & \\
\hline
\multirow{2}{4em}{43 \\ 411} & (1,2,2,2) & 43 \\ 
& & \\
\hline
\multirow{3}{4em}{421 \\ 331 \\ 322 } & (1,2,3,1) & 421 \\ 
& & \\
& & \\
\hline
\end{tabular}
\end{center}
\end{example}

\smallskip
\noindent
We now explore a different aspect of $x$-ray lists of partitions.

\smallskip\noindent We say that a partition $\mu$ is contained in partition $\eta$ if $\mu_i\leq \eta_i$ for all $i$ (the partitions are padded with $0$ after the last part). 
Recall that $\delta_r$ denotes the staircase partition of length $r$. We say that $\lambda$ is {\it maximally contained} in $\delta_r$ if it is contained in $\delta_r$ but not in $\delta_{r-1}$. We consider the following motivating example.

\begin{example}
The  table below  \cite{STSP} is constructed as follows. For $n\geq1$, the $n^{th}$ row  records the number of partitions of $n$ that are maximally contained in $\delta_r$ for $1\leq r\leq n$. 
\begin{align*}
&1 \\ &0, 2 \\ &0, 1, 2 \\ &0, 0, 3, 2 \\ &0, 0, 3, 2, 2 \\ &0, 0, 1, 6, 2, 2 \\ &0, 0, 0, 7, 4, 2, 2.
\end{align*}
For example, $\{(4), (3,1), (2,2), (2,1,1), (1,1,1,1)\}$ is the set of  partitions of $n=4$. The fourth row in the above table shows that there are no partitions maximally contained in $\delta_1$ or $\delta_2$;  there are $3$ partitions maximally contained in $\delta_3$; there are $2$ partitions maximally contained in $\delta_4$. Thus, the the fourth row is $0, 0, 3, 2$.

\smallskip
\noindent
On the other hand, the multiset of $x$-ray lists of $n=4$ with zero entries omitted is $\{1111, 211, 211, 211, 1111\}$. There are no $x$-ray lists of length $1$ or $2$; there are $3$ $x$-ray lists of length $3$; there are 2 $x$-ray lists of length $4$. This data is precisely the fourth row in the above table: $0, 0, 3, 2$. \end{example}

\noindent This agreement  is not accidental as shown in the next theorem. By the {\it length} of an $x$-ray list, we mean the number of non-zero entries in the list.

\begin{theorem} \label{biject-triangle} For $n\geq1$ and $1\leq r\leq n$, the number of partitions of $n$ with $x$-ray list of length $r$ equals the number of partitions of $n$ maximally contained in $\delta_r$. \end{theorem}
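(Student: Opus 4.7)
The plan is to show that the two conditions ``$\lambda$ has $x$-ray list of length $r$'' and ``$\lambda$ is maximally contained in $\delta_r$'' define the \emph{same} subset of partitions of $n$; the theorem then follows by matching cardinalities via the identity map. To set up the common language, for a partition $\lambda$ I would use the statistic $r(\lambda):=\max\{i+j-1 \colon (i,j)\in \lambda\}$, the index of the last anti-diagonal of the Young diagram that contains a cell of $\lambda$.

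First I would identify the length of $\lambda_x$ with $r(\lambda)$. By construction, the $k$-th entry of $\lambda_x$ is the number of cells of $\lambda$ on the $k$-th anti-diagonal (the cells $(i,j)$ with $i+j=k+1$), so this entry is nonzero iff $\lambda$ meets the $k$-th anti-diagonal. It remains to observe that the nonzero entries of $\lambda_x$ occur consecutively starting at position $1$. This is precisely the ``zeros propagate zeros'' property already used in the proof of Lemma \ref{x-ray-lemma}: if $(i,j)\notin \lambda$ then no cell directly below or directly to the right of $(i,j)$ lies in $\lambda$ either. Consequently, if the $k$-th anti-diagonal meets $\lambda$ in no cell, neither does any anti-diagonal indexed $k'>k$. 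Hence the nonzero entries of $\lambda_x$ are exactly those at positions $1,2,\dots,r(\lambda)$, and the length of $\lambda_x$ equals $r(\lambda)$.

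Next I would identify the condition ``$\lambda$ is maximally contained in $\delta_r$'' with $r(\lambda)=r$. The Young diagram of $\delta_r=(r,r-1,\dots,1)$ consists precisely of the cells $(i,j)$ with $i+j\leq r+1$, so $\lambda\subseteq \delta_r$ iff every cell $(i,j)\in\lambda$ satisfies $i+j\leq r+1$, iff $r(\lambda)\leq r$. Thus $\lambda\subseteq \delta_r$ and $\lambda\not\subseteq \delta_{r-1}$ iff $r(\lambda)\leq r$ and $r(\lambda)>r-1$, i.e., iff $r(\lambda)=r$. Combining the previous two paragraphs shows that the two sets of partitions coincide, and the theorem follows.

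The main point is that once both sides of the asserted equality are translated into a statement about the outermost anti-diagonal of $\lambda$, the theorem becomes a tautology. The only nontrivial ingredient is the propagation of zeros along rows and columns of the Ferrers matrix, which is already established in the proof of Lemma \ref{x-ray-lemma}, so no new combinatorial machinery is needed.
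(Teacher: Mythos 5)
Your proof is correct and follows essentially the same route as the paper: both arguments reduce each of the two conditions to the statement that the first $r$ anti-diagonals of $\lambda$ are nonempty while the $(r+1)^{st}$ is empty, using the propagation of zeros from Lemma \ref{x-ray-lemma} and the fact that $\delta_r$ consists exactly of the cells $(i,j)$ with $i+j\leq r+1$. Your write-up just makes the identification of both sides with the statistic $r(\lambda)$ slightly more explicit than the paper does.
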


\begin{proof} Let $\lambda\vdash n$ and suppose its x-ray list, $\lambda_x$, has length $r$. From Lemma \ref{x-ray-lemma}, the first $r$ anti-diagonals in the Young diagram  of $\lambda$ are non-empty while the $(r+1)^{st}$ anti-diagonal is empty. Since $(\delta_r)_x=(1,2,3,\dots,r)$, it follows that $\lambda$ is maximally contained in $\delta_r$.

\smallskip
\noindent
Conversely, if $\lambda$ is maximally contained in $\delta_r$, then the $(r+1)^{st}$ anti-diagonal of $\lambda$ is empty and the $r^{th}$ anti-diagonal is non-empty. Again, by Lemma \ref{x-ray-lemma}, the $x$-ray list $\lambda_x$ has length $r$.
\end{proof}

\smallskip


\begin{thebibliography}{00}

\bibitem{A12} T. Amdeberhan, \textit{Theorems, problems and conjectures.}\\ http://dauns01.math.tulane.edu/~tamdeberhan/conjectures.pdf

\bibitem{A98} G. E. Andrews,  \textit{The Theory of Partitions.} Reprint of the 1976 original. Cambridge Mathematical Library. Cambridge University Press, Cambridge, 1998, +255 pp.

\bibitem{A90} G. E. Andrews, \textit{Three aspects of partitions.} S\'eminaire Lotharingien de Combinatoire, B25f, 1990.

\bibitem{AB19} 
G. E. Andrews and C. Ballantine, 
\textit{Almost partition identities}, Proc. Natl. Acad. Sci. USA 116 (2019). no. 12, 5428--5436.

\bibitem{AB05} G. E. Andrews, B. C. Berndt, \textit{Ramanujan's Lost notebook, Part I.} Springer, New York, 2005.

\bibitem{ADH88} G. E. Andrews, F. Dyson, and D. Hickerson, \textit{Partitions and indefinite quadratic forms.} Invent. Math. 91 (1988), 391--407. 

\bibitem{AJO01} G. E. Andrews, J. Jim\'enez-Urroz,  and K. Ono, \textit{$q$-series identities and values of certain $L$-functions.} Duke Math. J. 108(3) (2001)  395--419.

\bibitem{BMPS} C. Bebeacua, T. Mansour, A.  Postnikov, S. Severini, 
\textit{On the $X$-rays of permutations.}  Proceedings of the Workshop on Discrete Tomography and its Applications, 193--203,
Electron. Notes Discrete Math., 20, Elsevier Sci. B. V., Amsterdam, 2005. 

\bibitem{BG} A. Berkovich and F. G. Garvan, \textit{Some observations on Dyson’s new symmetries of partitions.} J. Combin. Theory, Ser. A 100 (2002), 61--93.

\bibitem{CS} P. S. Campbell and A.  Stokke,
\textit{Hook-content formulae for symplectic and orthogonal tableaux.} 
Canad. Math. Bull. 55 (2012), no. 3, 462--473.

\bibitem{H10} G.-N. Han, \textit{The Nekrasov-Okounkov hook length formula:
refinement, elementary proof, extension, and applications}, Ann. Inst. Fourier (Grenoble) 60 (2010), no. 1, 1–29

\bibitem{K12} B. Kim, \textit{On partition congruences for overcubic partition pairs}, Commun. Korean Math. Soc. 27 (2012), no. 3, 477–482

\bibitem{NO06} N.A. Nekrasov and A. Okounkov,  \textit{Seiberg-Witten theory and random partitions.} The unity of mathematics, 525--596, Progr. Math., 244, Birkh\"auser Boston, Boston, MA, 2006.

\bibitem{EK} N. El Samra and R. C. King, 
\textit{Dimensions of irreducible representations of the classical Lie groups.}
J. Phys. A 12 (1979), no. 12, 2317--2328. 

\bibitem{RPS10} R. P. Stanley, \textit{Some combinatorial properties of hook lengths, contents, and parts of partitions},
Ramanujan J. 23 (2010), no. 1-3, 91–105. 

\bibitem{ec2} R. P. Stanley, \textit{Enumerative combinatorics.} Vol. 2. Cambridge Studies in Advanced Mathematics, 62. Cambridge University Press, Cambridge, 1999. xii+581 pp.

\bibitem{S} S. Sundaram,  \textit{Tableaux in the representation theory of the classical Lie groups.} Invariant theory and tableaux (Minneapolis, MN, 1988), 191--225, IMA Vol. Math. Appl., 19, Springer, New York, 1990.

\bibitem{FMatrix} The On-Line Encyclopedia of Integer Sequences, Sequence A237981, https://oeis.org.

\bibitem{STSP} The On-Line Encyclopedia of Integer Sequences, Sequence A325189, https://oeis.org.

\end{thebibliography}
\end{document}